\setlist[enumerate]{label = (\alph*), ref=(\text{\alph*)}}
\setlist[itemize]{nolistsep}
\tikzstyle{precornfill} = [fill=black!10, draw = black]
\tikzstyle{cornfill} = [fill=black!30, draw = black]
\renewcommand{\phi}{\varphi}
\newcommand{\KK}{\mathbb{K}}
\renewcommand{\AA}{\mathbb{A}}
\newcommand{\GG}{\mathbb{G}}
\newcommand{\PP}{\mathbb{P}}
\newcommand{\mm}{\mathfrak{m}}
\newcommand{\NN}{\mathbb{Z}_{>0}}
\DeclareMathOperator{\GL}{GL}
\DeclareMathOperator{\Aut}{Aut}
\DeclareMathOperator{\Lie}{Lie}
\DeclareMathOperator{\Mat}{Mat}
\DeclareMathOperator{\Soc}{Soc}
\DeclareMathOperator{\length}{length}
\theoremstyle{plain}
\newtheorem{lemma}{Lemma}
\newtheorem{proposition}{Proposition}
\newtheorem{theorem}{Theorem}
\theoremstyle{definition}
\newtheorem{definition}{Definition}
\theoremstyle{remark}
\begin{document}

\title[Projective Hypersurfaces of High Degree with an Additive Action
]{Projective Hypersurfaces of High Degree \\ Admitting an Induced
Additive Action
}

\author{Ivan Beldiev}
\address{HSE University, Faculty of Computer Science, Pokrovsky Boulvard 11, Moscow, 109028 Russia}
\email{isbeldiev@hse.ru, ivbeldiev@gmail.com}

\thanks{The article was prepared within the framework of the project “International academic cooperation” HSE University.}

\subjclass[2010]{Primary 14L30, 13E10; \ Secondary 14J70, 14J17}

\keywords{Projective space, hypersurface, commutative unipotent group, local algebra, Gorenstein algebra}

\begin{abstract}
We study induced additive actions on projective hypersurfaces, i.e. effective regular actions of the algebraic group $\mathbb G_a^m$ with an open orbit that can be extended to a regular action on the ambient projective space. It is known that the degree of a hypersurface $X\subseteq\PP^n$ admitting an induced additive action cannot be greater than $n$ and there is a unique such hypersurface of degree $n$. We give a complete classification of hypersurfaces $X\subseteq \PP^n$ admitting an induced additive action of degrees from $n-1$ to $n-3$.
\end{abstract}

\maketitle

\section{Introduction}
\label{intro}

In the paper, we assume that the ground field $\KK$ is algebraically closed of characteristic zero. By an algebraic variety, we mean an algebraic variety over $\KK$. We denote by $\GG_a^m$ the algebraic group $(\KK^m, +)$.

An \emph{additive action} on an algebraic variety $X$ is an effective regular action of the group~$\GG_a^m$ on $X$ with an open orbit. In this paper, we consider only the case when $X\subseteq \PP^n$ is a projective hypersurface and an additive action on $X$ is \emph{induced}, i.e. can be extended to a regular action of $\GG_a^m$ on the ambient projective space $\PP^n$. It is clear that $n = m + 1$ for dimension reasons.

Many results on additive actions were obtained during the last decades. For example, all projective toric hypersurfaces admitting an additive action are classified in \cite{Sha-2}. In~\cite{Liu}, the author obtains a classification of additive actions on hyperquadrics of corank~$2$ whose singularities are not fixed by these actions. Other recent results can be found in \cite{AP, AR, AS, BGT, Sha}.

There is a bijection between additive actions on~$\PP^n$ and local finite-dimensional commutative associative unital algebras of dimension~$n+1$ established by Hassett and Tschinkel in \cite{HT}. This bijection is called the Hassett-Tschinkel correspondence. In \cite{AZa}, a generalized version of this correspondence is suggested. It turns out that there is, up to equivalences, a bijection between the following objects:

\begin{enumerate}
    \item induced additive actions on projective hypersurfaces in $\PP^n$ that are not a hyperplane;
    \item pairs $(A, U)$, where $A$ is a local commutative associative unital algebra over $\KK$ of dimension $n+1$ with the maximal ideal $\mm$ and $U\subseteq \mm$ is a hyperplane generating the algebra $A$. Such pairs $(A,U)$ are called \emph{$H$-pairs}.
\end{enumerate}

It turns out that the existence of an induced additive action on a projective hypersurface is a strong condition. For example, a smooth hypersurface $X$ admits an induced additive action if and only if $X$ is a non-degenerate quadric. It is also proved in~\cite[Corollary~5.2]{AS} that the degree $d$ of a hypersurface $X\subseteq \PP^n$ admitting an induced additive action cannot be greater than $n$. 

It is natural to try to describe projective hypersurfaces of other high degrees admitting an induced additive action. It suffices to consider only \emph{non-degenerate} hypersurfaces i.e. hypersurfaces which are not isomorphic to a projective cone over a hypersurface in a smaller projective space. Indeed, it is shown in \cite[Proposition 2.20 and Corollary 2.23]{AZa} that an additive action on a degenerate hypersurface can be effectively reduced to an additive action on a non-degenerate hypersurface in a smaller projective space having the same equation (and hence the same degree). It is known (see \cite[Theorem 2.30]{AZa}) that non-degenerate hypersurfaces of degree $d$ in $\PP^n$ correspond to $H$-pairs $(A,U)$, where $A$ is a Gorenstein local algebra of dimension $n+1$ with the maximal ideal $\mm$ and $U$ is a hyperplane in $\mm$ complementary to $\mm^d$. It is conjectured in \cite{AZa} that the hypersurface and the additive action on it do not depend on the choice of $U$.

The particular case $d = n$ is studied in \cite{ABeZa}. It turns out that for each $n\geq 2$ there exists a unique hypersurface in $\PP^n$ of degree $n$ with an induced additive action. The corresponding local algebra is $\KK[x]/(x^{n+1})$.

In this paper, we give a complete classification of non-degenerate hypersurfaces in $\PP^{n}$ of degrees $n-1$, $n-2$ and $n-3$ admitting an induced additive action. The hypersurfaces are described in terms of the corresponding Gorenstein algebras. For each of them, we verify that the corresponding non-degenerate hypersurface does not depend on the choice of the subspace $U$. Our work is largely based on the classification of certain classes of Gorenstein local algebras obtained in \cite{Casn} and especially in \cite{ElVa}.

The number of isomorphism classes of non-degenerate hypersurfaces in $\PP^n$ of degrees $d$ from $n-1$ to $n-3$ admitting an induced additive action is given by the following table.

\vspace{0.2cm}

\begin{center}
\begin{tabular}{ |c|c|c|c|c|c|c|c|c|  }
\hline
  \diagbox{$d$}{$n$}& $3$ & $4$ & $5$ & $6$ & $7$ & $8$ & $9$ & $\geq 10$\\
\hline
$n-1$ & \multicolumn{8}{|c|}{1}\\
\hline
$n - 2$ & -- & $1$ & $2$ & \multicolumn{5}{|c|}{3}\\
\hline
$n - 3$ & -- & -- & $1$ & $3$ & $6$ & $5$ & $\infty$ & $6$ \\
\hline
\end{tabular}
\end{center}

\vspace{0.05cm}

\begin{center}
Table 1: The number of non-degenerate hypersurfaces in $\PP^n$ of degree $d$\\
admitting an induced additive action
\end{center}

\vspace{0.3cm}

It turns out that there is a unique non-degenerate hypersurface of degree $n-1$ in~$\PP^n$ admitting an induced additive action for any $n \geqslant 3$. This hypersurface is normal if and only if $n = 3$ or $n = 4$. The equations of this hypersurface, for example, for $n = 4$ and $n = 6$, are
$$z_0^2z_3 - z_0z_1z_2 - \frac{1}{2}z_0z_4^2 + \frac{1}{3}z_1^3 = 0 \text{ and}$$
$$z_0^4z_5-z_0^3z_1z_4 - z_0^3z_2z_3 - \frac{1}{2}z_0^2z_6^2 + z_0^2z_1^2z_3 + z_0^2z_1z_2^2 - z_0z_1^3z_2 + \frac{1}{5}z_1^5 = 0.$$


For $d = n - 3$ we have $6$ isomorphism classes of such hypersurfaces if $n\geqslant 10$. For $5\leqslant n \leqslant 9$, the number of such hypersurfaces is also computed and is given in Theorem \ref{codim_three}. The most interesting case is $n = 9$, when an infinite family of non-isomorphic hypersurfaces arises. The corresponding Gorenstein local algebras are
$$\KK[x,y]/(y^2 - x^2y - cx^4, x^3y), \quad c\in\KK^*.$$
These algebras (and hence the corresponding hypersurfaces) are pairwise not isomorphic for different $c\in\KK^*$. All of the corresponding hypersurfaces are not normal.

\section{Preliminaries}

We start with several definitions and results on local finite-dimensional algebra. Recall that an algebra is called \emph{local} if it has a unique maximal ideal $\mm$. All algebras in this paper are assumed to be commutative, associative, and unital.

\begin{lemma} \cite[Lemma 1.2]{AZa}
    A finite-dimensional algebra $A$ is local if and only if $A$ is the direct sum of its subspaces $\KK \oplus \mm$, where $\mm$ is the ideal consisting of all nilpotent elements of $A$.
\end{lemma}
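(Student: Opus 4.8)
The plan is to prove the two implications separately, using only standard facts about finite-dimensional commutative algebras over the algebraically closed field $\KK$.

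For the "only if" direction, suppose $A$ is local with maximal ideal $\mm$. First I would observe that $A/\mm$ is a finite-dimensional field extension of $\KK$, hence equals $\KK$ since $\KK$ is algebraically closed; this gives the vector space decomposition $A = \KK \oplus \mm$. It then remains to identify $\mm$ with the set $N$ of nilpotent elements of $A$. The inclusion $N \subseteq \mm$ is immediate: a nilpotent element cannot be a unit, so it lies in some maximal ideal, and $\mm$ is the only one. For the reverse inclusion, take $x \in \mm$ and consider the descending chain of ideals $(x) \supseteq (x^2) \supseteq \cdots$; by finite-dimensionality it stabilizes, so $x^k = x^{k+1}y$ for some $k \ge 1$ and some $y \in A$, i.e. $x^k(1 - xy) = 0$. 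Since $xy \in \mm$, the element $1 - xy$ lies in no maximal ideal and is therefore invertible, forcing $x^k = 0$. Hence $\mm = N$.

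For the "if" direction, assume $A = \KK \oplus \mm$, where $\mm$ is the ideal of all nilpotent elements (this set is automatically an ideal, being the nilradical of a commutative ring). Since $A/\mm \cong \KK$ is a field, $\mm$ is a maximal ideal, and I claim it is the only one. Let $I \subsetneq A$ be any proper ideal and $a \in I$; write $a = \lambda \cdot 1 + m$ with $\lambda \in \KK$ and $m \in \mm$. If $\lambda \ne 0$, then $a = \lambda(1 + \lambda^{-1}m)$ is invertible, because $1 + \lambda^{-1}m$ has inverse $\sum_{i \ge 0}(-\lambda^{-1}m)^i$, a finite sum as $m$ is nilpotent; this contradicts $I$ being proper. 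Therefore $\lambda = 0$ and $a \in \mm$, so $I \subseteq \mm$. Thus $\mm$ is the unique maximal ideal of $A$ and $A$ is local.

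The genuinely routine points are the two invertibility facts used above (a nilpotent perturbation of a unit is a unit, and an element lying outside every maximal ideal is a unit) together with the stabilization of the descending chain of principal ideals. The one place calling for a little care is the step identifying $\mm$ with the nilradical in the local case: one must not invoke the conclusion being proved, but argue directly from finite-dimensionality as in the chain argument — equivalently, one may cite the classical fact that the Jacobson radical of an Artinian ring is nilpotent.
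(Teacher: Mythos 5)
Your proof is correct. Note that the paper itself gives no argument for this lemma, citing it to \cite[Lemma~1.2]{AZa}; your proof is the standard one (splitting $A=\KK\oplus\mm$ via $A/\mm=\KK$ over the algebraically closed field, identifying $\mm$ with the nilradical through the stabilizing chain $(x)\supseteq(x^2)\supseteq\cdots$, and conversely using the finite geometric series to invert $\lambda+m$ with $\lambda\ne 0$), and all steps are sound.
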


\begin{definition}
    The \emph{socle} of a local algebra $A$ with the maximal ideal $\mm$ is the ideal $$\Soc A = \{a\in A\mid a \mm = 0\}.$$ A local finite-dimensional algebra $A$ is called \emph{Gorenstein} if $\dim \Soc A = 1$.
\end{definition}

If $d$ is the maximal number such that $\mm^d\ne 0$, then $\mm^d\subseteq \Soc A$. However, this inclusion can be strict. So, $A$ is Gorenstein if and only if $\dim \mm^d = 1$ and $\Soc A = \mm^d$.

Let $A$ be a local finite dimensional algebra and $\mm$ its maximal ideal. Consider the following sequence of ideals in $A$:
$$A\supset \mm\supset \mm^2 \supset \ldots \supset \mm^d \supset \mm^{d+1} = 0.$$
The number $d+1$ is called the \emph{length} of the algebra $A$. Denote $r_i = \dim \mm^i - \dim\mm^{i+1}$; in particular, $r_0 = 1$.
\begin{definition}
    The sequence $r_0, r_1, \ldots, r_{d}$ is called the \emph{Hilbert-Samuel sequence} of the algebra $A$.
\end{definition}

Next, we give a formal definition of equivalence of induced additive actions on projective hypersurfaces.

\begin{definition}
    Two induced additive actions $\alpha_i\colon \GG_a^m \times X_i\to X_i$, $X_i\subseteq \PP^n$, $i = 1, 2$, are called \emph{equivalent} if there exists an automorphism of algebraic groups $\phi\colon \GG_a^m \to \GG_a^m$ and an automorphism $\psi \colon \PP^n \to \PP^n$ such that $\psi(X_1) = X_2$ and $\psi\circ\alpha_1=\alpha_2\circ(\phi\times\psi).$
\end{definition}

Let us give the definition of an $H$-pair.

\begin{definition}
\label{pdpd}
An \emph{$H$-pair} is a pair $(A,U)$, where $A$ is a local finite-dimensional algebra with the maximal ideal $\mm$ and $U\subseteq \mm$ is a hyperplane generating $A$ as a unital algebra.
\end{definition}

One can define equivalence of $H$-pairs as follows.

\begin{definition}
    Two pairs $(A_1, U_1)$ and $(A_2, U_2)$ are called \emph{equivalent} if there exists an isomorphism of algebras $\phi\colon A_1 \to A_2$ such that $\phi(U_1) = U_2$.
\end{definition}

Now, we give the precise statement of the generalized version of the Hassett-Tschinkel correspondence.

\begin{theorem}\cite[Theorem 2.6]{AZa}
\label{prth}
    Suppose $n\in \NN$. There is a one-to-one correspondence between the following objects:
    \begin{enumerate}
        \item induced additive actions on hypersurfaces in $\PP^n$ that are not a hyperplane;
        \item pairs $(A,U)$, where $A$ is a local commutative associative unital algebra of dimension~$n+1$ with the maximal ideal $\mm$ and $U\subseteq \mm$ is a hyperplane generating the algebra~$A$.
    \end{enumerate}
    This correspondence is considered up to equivalences from Definitions 3 and 4.
\end{theorem}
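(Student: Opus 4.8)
The plan is to describe both directions of the bijection explicitly and then verify that they are mutually inverse and respect the two notions of equivalence. \emph{From an $H$-pair to an induced action:} given $(A,U)$ with $\dim A=n+1$, put $V=A$, so that $\PP(V)=\PP^n$. Since the elements of $U\subseteq\mm$ are pairwise commuting nilpotents, $\exp(u)=\sum_{k\ge 0}u^k/k!$ is a well-defined unit of $A$ with $\exp(u+u')=\exp(u)\exp(u')$, so multiplication by $\exp(u)$ defines a regular action of $\GG_a^m=(U,+)$ on $\PP^n$, which is effective because $\exp(u)\in\KK^*$ forces $u=0$. The orbit of $[1]$ is $\{[\exp(u)]:u\in U\}$, and the differential of the orbit map at the origin is the injective map $U\to V/\KK\cdot 1=T_{[1]}\PP^n$, $u\mapsto u\bmod\KK\cdot 1$ (injective as $U\cap\KK\cdot 1=0$); hence this orbit has dimension $\dim U=n-1$ and its closure $X$ is a hypersurface carrying an induced additive action, effectiveness on $X$ being checked in the same way. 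Moreover $X$ is not a hyperplane: the linear span of $\{\exp(u):u\in U\}$ contains $1$ and every product of elements of $U$, each occurring up to a nonzero scalar as a coefficient of the polynomial map $t\mapsto\exp(\sum_i t_iu_i)$ for suitable $u_i\in U$, so it equals $A=V$ because $U$ generates $A$, and these points lie on $X$, which therefore spans $\PP^n$.

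\emph{From an induced action to an $H$-pair:} let $\alpha$ be an induced additive action on a hypersurface $X\subseteq\PP^n$ that is not a hyperplane, and let $\bar\rho\colon\GG_a^m\to\Aut(\PP^n)=\operatorname{PGL}_{n+1}$ be an extension of the action to $\PP^n$; it is injective, since an element acting trivially on $\PP^n$ acts trivially on $X$. As $\GG_a^m$ is commutative unipotent, $\bar\rho$ lifts to a representation $\rho\colon\GG_a^m\to\GL(V)$, $V=\KK^{n+1}$, which is unique because $\operatorname{Hom}(\GG_a^m,\GG_m)=0$, and $\rho=\exp\circ d\rho$ with $d\rho$ injective. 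Let $N_1,\dots,N_m$ be the images under $d\rho$ of a basis of $\Lie\GG_a^m$; these commuting nilpotents generate a commutative unital subalgebra $A=\KK\langle N_1,\dots,N_m\rangle\subseteq\operatorname{End}(V)$, which is local with maximal ideal $\mm=(N_1,\dots,N_m)$ (a finite-dimensional commutative algebra generated by nilpotents), and $U:=\operatorname{span}(N_1,\dots,N_m)$ is an $m$-dimensional subspace of $\mm$ generating $A$. Pick $v_0$ with $[v_0]$ in the open orbit; then $V_0:=Av_0$ is $\rho$-invariant and contains the orbit, so $X\subseteq\PP(V_0)$, and if $V_0\ne V$ then $\PP(V_0)$ would be a linear subspace of dimension at most $n-1$ containing the irreducible $(n-1)$-dimensional variety $X$, forcing $X=\PP(V_0)$ to be a hyperplane, a contradiction. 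Hence $v_0$ is a cyclic vector and $V\cong A$ as $A$-modules (the annihilator of $v_0$ in $A\subseteq\operatorname{End}(V)$ is zero), so $\dim A=n+1$, $\dim\mm=n$, and $U$ is a hyperplane in $\mm$; thus $(A,U)$ is an $H$-pair.

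\emph{Conclusion.} Transporting $\rho$ along the isomorphism $V\cong A$ with $v_0\mapsto 1$ turns it into multiplication by $\exp(u)$, $u\in U$, on $A$ with $X=\overline{\GG_a^m\cdot[1]}$, which is precisely the action assigned to $(A,U)$ by the first construction; and the second construction, applied to an $H$-pair, returns it. An equivalence $(\psi,\phi)$ of induced actions lifts to a linear isomorphism $V_1\to V_2$ whose conjugation sends each $N_i^{(1)}$ to the $d\phi$-combination of the $N_j^{(2)}$, hence induces an algebra isomorphism $A_1\xrightarrow{\sim}A_2$ carrying $U_1$ to $U_2$, and conversely; so the correspondence descends to equivalence classes. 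The main difficulty is the second direction: extracting the algebra $A$ together with a cyclic vector from the bare hypotheses that the action has an open orbit and $X$ is not a hyperplane, and then confirming that passing from $V$ to $A$ and back loses no information, so that the two constructions really are inverse as $\GG_a^m$-varieties. The uniqueness of the linear lift $\rho$, which relies on $\GG_a^m$ having no nontrivial characters, is what keeps this last point free of a spurious $\GG_m$-ambiguity.
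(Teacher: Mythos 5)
Your proposal is correct and follows essentially the same route as the paper, which does not reprove this cited result but sketches exactly this correspondence: $(A,U)\mapsto X=\overline{\GG_a^{n-1}\cdot[1]}\subseteq\PP(A)$ via multiplication by $\exp U$, and conversely lifting the induced action to a linear representation, taking $U=d\rho(\mathfrak g_a^{n-1})$ and $A$ the subalgebra of $\Mat_{n+1}(\KK)$ it generates, with the details deferred to \cite[Theorem 1.38]{AZa}. Your added verifications (cyclicity of $v_0$ from $X$ not being a hyperplane, uniqueness of the linear lift via $\operatorname{Hom}(\GG_a^m,\GG_m)=0$, and compatibility with the two equivalences) are the standard arguments and are sound.
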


The construction of this correspondence is done as follows. For an $H$-pair $(A, U)$, denote by $p\colon A \setminus \{0\} \to \PP(A)\cong \PP^n$ the canonical projection. Then we define
$$X = p(\overline{\KK^{\times}\exp U}),$$
i.e., it is the projectivization of the Zariski closure of the subset $\KK^{\times}\exp U\subseteq A\setminus\{0\}$. Since $A$ is commutative, the algebraic group $\exp U$ can be identified with $\GG_a^{n-1}$; hence the multiplication by elements of $\exp U$ defines an action of $\GG_a^{n-1}$ on $\PP(A)$. It is easy to see that $X$ is preserved under this multiplication, so this defines an induced additive action of $\GG_a^{n-1}$ on $X \subseteq \PP(A) \cong \PP^n$.

Conversely, an induced additive action of $\GG_a^{n-1}$ on a hypersurface $X\subseteq \PP^n = \PP(V)$, where $\dim V = n+1$, can be lifted to a linear action of $\GG_a^{n-1}$ on $V$, which gives us a faithful representation $\rho \colon \GG_a^{n-1} \to \GL_{n+1}(\KK)$. Let $U$ be the vector space $d\rho(\mathfrak g_a^{n-1})$ and define $A$ as the unital subalgebra of $\Mat_{n+1}(\KK)$ generated by $U$; here
$$
d\rho\colon \mathfrak g_a^{n-1} = \Lie(\GG_a^{n-1})\to \Mat_{n+1}(\KK)
$$
is the differential of the map $\rho$. It can be checked that $(A, U)$ is an $H$-pair. One can find the details in \cite[Theorem~1.38]{AZa}.

Given an $H$-pair $(A, U)$, the equation of the corresponding hypersurface is computed as follows (for more details, see \cite[Chapter 2.2]{AZa}). Denote by $\pi$ the canonical projection $\pi\colon U \to U/\mm$. Let $d$ be the greatest positive integer such that $\mm^d \nsubseteq U$. The corresponding projective hypersurface is given by the homogeneous equation
\begin{equation}
\label{hyp_eq}
z_0^d\pi\left(\ln\Bigl(1 + \frac{z}{z_0}\Bigr)\right) = 0
\end{equation}
for $z_0 + z\in A = \KK \oplus \mm$, $z_0\in\KK$, $z\in \mm$. This hypersurface is irreducible and has degree~$d$.

It follows that the degree of a hypersurface $X\subseteq\PP^n$ admitting an induced additive action is at most $n$ since the length of the corresponding algebra is at most $n$.

We are particularly interested in so-called non-degenerate hypersurfaces. It turns out that additive actions on such hypersurfaces correspond to Gorenstein algebras.

\begin{definition}
   The hypersurface $X$ given by the equation $f(z_0, z_1, \ldots, z_n) = 0$, where $f$ is a homogeneous polynomial, is called \emph{non-degenerate} if one of the following equivalent conditions holds:
    \begin{enumerate}
    \item there exists no linear transform of variables such that the number of variables in $f$ after this transform becomes less than $n+1$;
    \item the hypersurface $X$ is not a projective cone over a hypersurface $Z\subseteq \PP^k$ in a projective subspace $\PP^k \subseteq \PP^n$ for some $k < n$.
    \end{enumerate}
\end{definition}

\begin{theorem} \cite[Theorem 2.30]{AZa}
\label{tgor}
Induced additive actions on non-degenerate hypersurfaces of degree $d$ in $\PP^n$ are in one-to-one correspondence with $H$-pairs $(A, U)$, where $A$ is a Gorenstein local algebra of dimension~$n+1$ with the socle $\mm^d$ and $\mm = U \oplus \mm^d$. 
\end{theorem}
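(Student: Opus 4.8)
The plan is to deduce the statement from the general correspondence of Theorem~\ref{prth} by restricting it on both sides: on the hypersurface side to the non-degenerate hypersurfaces, and on the algebra side to the Gorenstein $H$-pairs. So I would fix an $H$-pair $(A,U)$ with $\dim A=n+1$, let $X\subseteq\PP(A)\cong\PP^n$ be the corresponding hypersurface, and choose a linear functional $\xi\in A^*$ with $\ker\xi=\KK\cdot 1\oplus U$, so that $\ker(\xi|_{\mm})=U$ and, by \eqref{hyp_eq}, $X$ is cut out by
$$
f(z_0+z)=z_0^{\,d}\,\xi\!\left(\ln\!\left(1+\frac{z}{z_0}\right)\right),\qquad z_0+z\in A=\KK\oplus\mm,
$$
a form of degree $d$, where $d$ is the largest integer with $\mm^d\nsubseteq U$. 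Non-degenerate hypersurfaces have degree $\ge 2$, so they are not hyperplanes and Theorem~\ref{prth} applies. It therefore suffices to prove that $X$ is non-degenerate if and only if $A$ is Gorenstein with $\Soc A=\mm^d$ and $\mm=U\oplus\mm^d$; and one should note that the $d$ occurring here then coincides with the degree of $X$, since $\mm^{d+1}=\mm\cdot\Soc A=0\subseteq U$ while $\mm^d\ne 0$ and $\mm^d\cap U=0$.

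The crux is the claim that \emph{$X$ is degenerate (a projective cone) if and only if $A$ has a nonzero ideal contained in $U$.} The easy direction is a direct computation: given a nonzero ideal $I\subseteq U$ and some $v_0\in I\setminus\{0\}$, one writes $1+c+v=(1+c)\bigl(1+(1+c)^{-1}v\bigr)$ with $(1+c)^{-1}v\in I$, turns this product into a sum under $\ln$, and uses $\ln(1+v')\in I\subseteq\ker\xi$ for all $v'\in I$, to get $f(a+tv_0)=f(a)$ whenever $a$ has nonzero scalar part, hence for all $a$ by Zariski density; so $X$ is a cone with vertex $[v_0]$ and is degenerate. The reverse direction is the step I expect to be the hardest. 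I would work with the linear subspace $V_X=\{w\in A:\ f(a+tw)=f(a)\text{ for all }a\in A,\ t\in\KK\}$ of vertex directions, which is nonzero precisely when $X$ is degenerate, and prove two things about it. First, $V_X$ is an ideal of $A$: the group $\exp(U)$ acts on $A$ by linear maps preserving $\widehat X:=\overline{\KK^\times\exp U}$ and hence preserving the intrinsic subspace $V_X$, so for $v\in V_X$, $u\in U$ the expansion $\exp(tu)\cdot v=v+t(uv)+\cdots$ lies in the linear subspace $V_X$, forcing $uv\in V_X$; as $1$ and $U$ generate $A$, $V_X$ is $A$-stable. Second, $V_X\subseteq U$: the point $1$ lies in $\widehat X$ and its $\KK^\times\times\exp(U)$-orbit is open in $\widehat X$, smooth, with tangent space $\KK\cdot 1\oplus U$ at $1$, so the curve $t\mapsto 1+tw\in\widehat X$ through $1$ (for $w\in V_X$) has velocity $w\in\KK\cdot 1\oplus U$; but an ideal of $A$ meeting $1+\mm$ equals $A$, which would force $f\equiv 0$, so in fact $V_X\subseteq U$. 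Thus $V_X$ is a nonzero ideal of $A$ inside $U$. (Alternatively one can cite \cite[Proposition~2.20 and Corollary~2.23]{AZa}, where the reduction of a cone is realized precisely by replacing $(A,U)$ with $(A/I,U/I)$ for such an ideal $I$.)

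Finally I would translate the ideal condition into the Gorenstein condition, which is elementary over the Artinian local ring $A$. The condition ``$A$ has a nonzero ideal contained in $U$'' is equivalent to ``$\Soc A\cap U\ne 0$'': indeed $\Soc A\cap U$ is itself an ideal, and conversely every nonzero ideal of $A$ meets $\Soc A$. Hence $X$ is non-degenerate if and only if $\Soc A\cap U=0$. If $\Soc A\cap U=0$, then $\Soc A\subseteq\mm$ embeds into the line $\mm/U$, so $\dim\Soc A=1$, i.e.\ $A$ is Gorenstein, and $\mm=U\oplus\Soc A$; moreover $\mm^{d+1}$ is an ideal which, if nonzero, would lie in $U$ by the maximality of $d$, contradicting $\Soc A\cap U=0$, so $\mm^{d+1}=0$, whence $0\ne\mm^d\subseteq\Soc A$ and therefore $\Soc A=\mm^d$ and $\mm=U\oplus\mm^d$. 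The converse implication is immediate. Combining this with Theorem~\ref{prth} gives the bijection asserted in Theorem~\ref{tgor}. The only genuine difficulty is the reverse direction of the central claim — extracting an ideal of $A$ inside $U$ from the bare fact that $X$ is a cone — and the mechanism that makes it work is that the vertex locus $V_X$ is $\exp(U)$-equivariant (hence an $A$-submodule) and is pinned inside $\KK\cdot 1\oplus U$ by the smoothness of $\widehat X$ at the point $1$.
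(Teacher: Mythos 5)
The paper does not prove Theorem~\ref{tgor} at all: it is imported verbatim from \cite[Theorem 2.30]{AZa}, and the only in-paper trace of its mechanism is Proposition~\ref{prop_reduct}. Your argument is correct and follows the same route as that source: reduce to the equivalence ``$X$ degenerate $\Leftrightarrow$ $A$ has a nonzero ideal inside $U$'', then convert this to ``$\Soc A\cap U\neq 0$'' (any subspace of $\Soc A$ is an ideal, and any nonzero ideal of the Artinian local ring $A$ meets $\Soc A$), and finish with the socle bookkeeping $\dim\Soc A=1$, $\mm^{d+1}=0$, $\Soc A=\mm^d$, $\mm=U\oplus\mm^d$, which you do correctly. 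The only step where your write-up leans on an unproved assertion is the $\exp(U)$-stability of $V_X$: saying that $\exp(U)$ preserves $\widehat X$ ``and hence the intrinsic subspace $V_X$'' tacitly uses that $V_X$ is determined by the set $\widehat X$ alone, which requires knowing $\widehat X\cup\{0\}=\{f=0\}$ with $f$ reduced plus a leading-term comparison. This is easily patched, and more cleanly than by irreducibility: from \eqref{hyp_eq} one checks directly that $f\circ\exp(u)=f$ for all $u\in U$ (writing $a=z_0(1+m)$, multiplication by $\exp(u)$ adds $u\in\ker\xi$ under $\ln$ and keeps $z_0$), after which $f(a+t\,\exp(u)w)=f(\exp(u)(\exp(-u)a+tw))=f(a)$ gives $\exp(U)\cdot V_X\subseteq V_X$ immediately, and your derivative argument makes $V_X$ an ideal. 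Alternatively, the hard direction is exactly Proposition~\ref{prop_reduct} with $J$ the maximal ideal of $A$ contained in $U$ (if $J=0$ the hypersurface is non-degenerate), as you note. Minor cosmetic points: the phrase ``an ideal meeting $1+\mm$ equals $A$'' should read ``an ideal not contained in $\mm$ contains a unit, hence equals $A$'', and one should record (routinely) that non-degeneracy and the condition $\Soc A\cap U=0$ are each preserved by the respective equivalences, so the bijection of Theorem~\ref{prth} genuinely restricts.
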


In \cite{AP,AZa}, the authors consider the procedure of reduction of an induced additive action. Namely, consider an $H$-pair $(A,U)$ corresponding to an induced additive action on a hypersurface $X$. Let $J \subseteq A$ be an ideal of dimension $n - k$ contained in $U$.

\begin{proposition} \cite[Proposition 2.20 and Corollary 2.23]{AZa}
\label{prop_reduct}
The pair $(A/J, U/J)$ corresponds to an induced additive action on a projective hypersurface $Z\subseteq \PP^{k}$, and $X$ is the projective cone over $Z$, i.e., for some choice of coordinates in $\PP^{n}$ and $\PP^{k}$, the equations of the hypersurfaces~$X$ and $Z$ are the same. Moreover, if $J$ is the maximal (with respect to inclusion) ideal of~$A$ contained in~$U$, then $Z$ is a non-degenerate hypersurface in $\PP^k$.
\end{proposition}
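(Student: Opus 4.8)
The plan is to handle the three assertions of the statement in turn. First, since $J\subseteq U\subseteq\mm$, the quotient $A/J$ is local of dimension $(n+1)-(n-k)=k+1$ with maximal ideal $\mm/J$; the image $U/J$ has codimension $\dim\mm-\dim U=1$ in $\mm/J$, hence is a hyperplane, and $U/J$ generates $A/J$ because $U$ generates $A$ and the quotient map $q\colon A\to A/J$ is a surjective unital algebra homomorphism. Thus $(A/J,U/J)$ is an $H$-pair, and Theorem~\ref{prth} (together with formula~\eqref{hyp_eq}) associates to it an induced additive action on an irreducible hypersurface $Z\subseteq\PP(A/J)\cong\PP^k$ of degree $d'$, where $d'$ is the largest integer with $(\mm/J)^{d'}\not\subseteq U/J$; since $J\subseteq U$ one has $\mm^j+J\subseteq U\iff\mm^j\subseteq U$, so in fact $d'=d$.

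To prove that $X$ is the projective cone over $Z$, I would compare the equations produced by~\eqref{hyp_eq}. Fix a basis $1,e_1,\dots,e_n$ of $A$ with $e_1,\dots,e_{n-k}$ spanning $J$, with $e_1,\dots,e_{n-1}$ spanning $U$, and with $e_n\notin U$; the images $1,\bar e_{n-k+1},\dots,\bar e_n$ then form a basis of $A/J$ adapted in the same way to $U/J$, with dual coordinates $w_0,\dots,w_k$. Because $q$ is an algebra homomorphism, $q(\ln(1+z/z_0))=\ln(1+q(z)/z_0)$, and $q(z)$ depends only on the coordinates $z_{n-k+1},\dots,z_n$; moreover $q$ identifies $\mm/U$ with $(\mm/J)/(U/J)$, so the projection $\pi$ occurring in~\eqref{hyp_eq} for $(A,U)$ equals $\bar\pi\circ q$, where $\bar\pi$ is the corresponding projection for $(A/J,U/J)$. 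Since also $d=d'$, the polynomial defining $X$ in these coordinates involves only $z_0,z_{n-k+1},\dots,z_n$ and, regarded as a polynomial in those variables, coincides with the one defining $Z$; this is precisely the assertion that $X$ is the cone over $Z$ with vertex $\PP(J)$. (The same can be seen coordinate-free: $q$ carries $\overline{\KK^{\times}\exp U}$ into the affine cone over $Z$, so the affine cone over $X$ is contained in the $q$-preimage of the affine cone over $Z$; both are irreducible of dimension $n$, hence equal.)

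For the last assertion, assume $J$ is the maximal ideal of $A$ contained in $U$ and suppose, for contradiction, that $Z$ is degenerate, i.e.\ a cone. Let $\widehat Z$ be the affine cone over $Z$ and $N=\{v\in A/J\mid v+\widehat Z=\widehat Z\}$; this is a linear subspace, it spans the vertex of the cone, and it is nonzero because $Z$ is degenerate. The group $\exp(U/J)$ acts on $\widehat Z$ by multiplication, hence preserves $N$; differentiating the orbit maps $g\mapsto g\cdot v$ ($v\in N$) at the identity gives $(U/J)\cdot N\subseteq N$, and since $U/J$ generates $A/J$ this makes $N$ an ideal of $A/J$. As $N\subseteq\widehat Z\subsetneq A/J$, the ideal $N$ contains no unit, so $N\subseteq\mm/J$; and $N\subseteq U/J$, for otherwise $N+U/J=\mm/J$, so the image of $U/J$ in $(A/J)/N$ would be the whole maximal ideal, forcing $\widehat Z$ (a union of $N$-cosets) to map onto a dense subset and hence to equal all of $A/J$, a contradiction. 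Then $q^{-1}(N)$ is an ideal of $A$ strictly containing $J$ and contained in $q^{-1}(U/J)=U$, contradicting the maximality of $J$. Therefore $Z$ is non-degenerate.

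I expect the only genuine obstacle to be in this last part: identifying the vertex subspace $N$ of the cone with an ideal of $A/J$, and especially upgrading $N\subseteq\mm/J$ to $N\subseteq U/J$. The rest is bookkeeping with the correspondence of Theorem~\ref{prth} and with formula~\eqref{hyp_eq}, together with standard facts about projective cones, quotients by linear subspaces, and Zariski closures of images under linear maps.
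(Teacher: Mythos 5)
The paper does not actually prove this proposition: it is quoted with a citation to \cite[Proposition 2.20 and Corollary 2.23]{AZa}, so there is no internal argument to compare yours against. Judged on its own, your proof is correct and self-contained. The first two parts are handled properly: $(A/J,U/J)$ is an $H$-pair by the dimension count and surjectivity of $q$, the invariant $d$ is unchanged because $J\subseteq U$ gives $\mm^j+J\subseteq U\iff\mm^j\subseteq U$, and the cone statement follows either from your observation that $\pi=\bar\pi\circ q$ makes the equation \eqref{hyp_eq} for $(A,U)$ literally the equation of $Z$ in the variables $z_0,z_{n-k+1},\dots,z_n$, or from your coordinate-free comparison of the affine cone of $X$ with $q^{-1}(\widehat Z)$ (two irreducible closed sets of dimension $n$, one inside the other); either argument alone suffices. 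The substantive part is the non-degeneracy claim, and your mechanism is the right one: the translation space $N=\{v\mid v+\widehat Z=\widehat Z\}$ is nonzero when $Z$ is degenerate, is stable under multiplication by $\exp(U/J)$, hence by differentiation an $A/J$-ideal (using that $U/J$ generates), and $q^{-1}(N)$ then contradicts the maximality of $J$ once $N\subseteq U/J$ is known. The only place where you compress a step that deserves to be spelled out is precisely that inclusion: from $N+U/J=\mm/J$ you conclude that $\widehat Z$ "maps onto a dense subset and hence equals all of $A/J$"; to make this airtight one should note that $\widehat Z$ is closed and is a union of $N$-cosets, that the linear projection $A/J\to (A/J)/N$ is an open map (so the image of the complement of $\widehat Z$ is open and disjoint from the image of $\widehat Z$), and that the image of $\KK^\times\exp(U/J)$ is exactly the dense set of units of the quotient; this forces the image of $\widehat Z$ to be everything and hence $\widehat Z=A/J$, the desired contradiction. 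With that sentence added, your argument is a complete proof of the cited statement, in the same spirit as the source's correspondence between ideals contained in $U$ and cone structures on $X$.
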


It follows from Proposition \ref{prop_reduct} that the problem of classifying hypersurfaces of high degrees admitting an induced additive actions is reduced to classifying all such non-degenerate hypersurfaces. Indeed, if there is an additive action on a degenerate hypersurface, there is also an additive action on a non-degenerate hypersurface with the same equation (and hence with the same degree) but lying in a smaller projective space. Note that here we speak about classification of hypersurfaces themselves and not additive actions on them. The problem of classifying all additive actions on a degenerate hypersurface is a more delicate question, see \cite{Bel}. However, any non-degenerate hypersurface admits, up to equivalence, at most one induced additive action (see \cite[Theorem 2.32]{AZa}), so classifying all induced additive actions on non-degenerate hypersurfaces is equivalent to classifying simply such hypersurfaces.

\section{Results on Gorenstein algebras}

In this section, we list several results on local finite-dimensional and in particular Gorenstein algebras which we are going to use. First, we need the following technical lemma.

\begin{lemma}\label{degrees} \cite[Lemma 2.13]{AZa}
    Suppose that $\mm$ is the maximal ideal of a local commutative associative algebra $A$. Then for any $k\in\NN$ the space $\mm^k/\mm^{k+1}$ is linearly spanned by the elements $z^k$, $z\in\mm$.
\end{lemma}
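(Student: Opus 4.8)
The plan is to reduce the statement to a single polarization identity, using the characteristic-zero hypothesis in an essential way. By definition, $\mm^k$ is spanned as a $\KK$-vector space by the products $z_1 z_2 \cdots z_k$ with $z_1, \ldots, z_k \in \mm$; hence it suffices to write each such product as a $\KK$-linear combination of $k$-th powers of elements of $\mm$, and then pass to the quotient $\mm^k/\mm^{k+1}$. (In fact this proves the slightly stronger assertion that $\mm^k$ itself is spanned by the elements $z^k$, $z \in \mm$.)

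First I would record the inclusion--exclusion polarization formula: in any commutative ring in which $k!$ is invertible, and for any elements $z_1, \ldots, z_k$,
\[
z_1 z_2 \cdots z_k \;=\; \frac{1}{k!} \sum_{\varnothing \ne S \subseteq \{1, \ldots, k\}} (-1)^{k - |S|} \Bigl( \sum_{i \in S} z_i \Bigr)^{k}.
\]
This is checked by expanding the right-hand side via the multinomial theorem: the monomial $z_1 \cdots z_k$ occurs only in the term $S = \{1,\ldots,k\}$, with coefficient $k!/k! = 1$, while any other degree-$k$ monomial in the $z_i$ omits at least one variable, say $z_j$, and its contributions cancel upon pairing each $S$ with $j \notin S$ against $S \cup \{j\}$, which carry opposite signs and the same multinomial coefficient for a monomial not involving $z_j$. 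Since $\KK$ has characteristic zero, the scalar $1/k!$ belongs to $\KK$, so the identity applies in $A$.

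Now, if $z_1, \ldots, z_k \in \mm$, then each partial sum $\sum_{i \in S} z_i$ again lies in $\mm$, because $\mm$ is a $\KK$-subspace of $A$; hence the displayed identity exhibits $z_1 \cdots z_k$ as a $\KK$-linear combination of $k$-th powers of elements of $\mm$. Summing over the spanning set of $\mm^k$, we conclude that $\mm^k$ --- and a fortiori its quotient $\mm^k/\mm^{k+1}$ --- is linearly spanned by $\{\, z^k : z \in \mm \,\}$, as required.

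I do not expect a genuine obstacle here; the only subtle point is the characteristic-zero assumption, which is precisely what licenses dividing by $k!$ and is in fact necessary (in characteristic $p$ with $p \le k$ the conclusion can fail).
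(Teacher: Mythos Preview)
Your argument is correct: the inclusion--exclusion polarization identity is valid in characteristic zero, and since each $\sum_{i\in S} z_i$ lies in $\mm$, every product $z_1\cdots z_k$ is a $\KK$-linear combination of $k$-th powers from $\mm$, which indeed gives the stronger statement for $\mm^k$ itself before passing to the quotient.

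There is nothing to compare against here: the paper does not supply a proof of this lemma but simply cites it as \cite[Lemma~2.13]{AZa}. For what it is worth, the polarization approach you chose is the standard one, and is essentially the argument given in the cited reference as well.
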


The following lemma describes certain restrictions on the Hilbert-Samuel sequence of a local finite-dimensional algebra.

\begin{lemma}\label{alm_str}
    Let $r_0 = 1, r_1, r_2, \ldots, r_d$ be the Hilbert-Samuel sequence of a local finite-dimensional algebra $A$.
    \begin{enumerate}
        \item If $r_k = 1$ for some $k$, then $r_k = r_{k+1} = \ldots = r_d = 1$.
        \item If $r_2 = 2$, then $r_2 = \ldots = r_s = 2$ and $r_{s+1} = \ldots = r_d = 1$ for some $s\leq d$.
    \end{enumerate}
\end{lemma}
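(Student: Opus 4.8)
The plan is to analyse, for each $k$, the ``multiplication by a single element'' map $\mm^k/\mm^{k+1}\to\mm^{k+1}/\mm^{k+2}$. Lemma~\ref{degrees} will be used to realise a spanning set of $\mm^k/\mm^{k+1}$ in terms of $k$-th powers, and Nakayama's lemma (in the form: if $\mm^j=\mm^{j+1}$ then $\mm^j=0$) will guarantee $r_j\ge 1$ for all $j\le d$. So in both parts it suffices to produce, from the hypothesis, a well-chosen $z\in\mm$ for which multiplication by (a power of) $z$ is surjective from one graded piece to the next, and then run an induction.

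For part (a): since $r_k=1$, Lemma~\ref{degrees} gives $z\in\mm$ with $z^k\notin\mm^{k+1}$, so $\mm^k=\KK z^k+\mm^{k+1}$ and hence $\mm^{k+1}=\mm\,\mm^k=z^k\mm+\mm^{k+2}$. The crux is to show that $z^k w\in\KK z^{k+1}+\mm^{k+2}$ for every $w\in\mm$, which gives $r_{k+1}\le 1$; induction together with the Nakayama bound $r_j\ge 1$ ($k\le j\le d$) then yields $r_k=r_{k+1}=\dots=r_d=1$. To prove the crux, expand $(z+tw)^k$ for a formal parameter $t$: it lies in $\mm^k$, and since $\mm^k/\mm^{k+1}$ is one-dimensional spanned by $\overline{z^k}$, its image is $f(t)\,\overline{z^k}$ for some $f\in\KK[t]$ with $f(0)=1$, i.e.\ $(z+tw)^k=f(t)z^k+v(t)$ with $v(t)\in\mm^{k+1}\otimes\KK[t]$. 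Multiplying by $z+tw$ and reducing modulo $\mm^{k+2}$ gives the identity $\overline{(z+tw)^{k+1}}=f(t)\bigl(\overline{z^{k+1}}+t\,\overline{z^k w}\bigr)$ in $(\mm^{k+1}/\mm^{k+2})[t]$. Comparing coefficients of $t^1$ (and using $\operatorname{char}\KK=0$, so that $k$ is invertible) forces $\overline{z^k w}\in\KK\,\overline{z^{k+1}}$, as needed. This polarization step is the main obstacle: a priori $\{z^k w:w\in\mm\}$ could span a space as large as $\mm/\mm^2$ modulo $\mm^{k+2}$, and it is precisely the one-dimensionality of $\mm^k/\mm^{k+1}$, fed through the $t$-expansion, that collapses it.

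For part (b): using Lemma~\ref{degrees}, choose $u,v\in\mm$ with $\overline{u^2},\overline{v^2}$ a basis of $\mm^2/\mm^3$, and write $\overline{uv}=\alpha\,\overline{u^2}+\beta\,\overline{v^2}$. For $z=u+tv$ one computes that the images $\overline{zu}$ and $\overline{zv}$ in $\mm^2/\mm^3$ have coordinate matrix (in the basis $\overline{u^2},\overline{v^2}$) of determinant $\alpha t^2+t+\beta$, a polynomial in $t$ with nonzero linear coefficient, hence not identically zero. Picking $t$ avoiding its finitely many roots makes $\overline{zu},\overline{zv}$ a basis of $\mm^2/\mm^3$, so $\mm^2=z\mm+\mm^3$. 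An immediate induction then gives $\mm^{k+1}=z\mm^k+\mm^{k+2}$ for all $k\ge 1$, so multiplication by $z$ induces a surjection $\mm^k/\mm^{k+1}\twoheadrightarrow\mm^{k+1}/\mm^{k+2}$ and therefore $r_{k+1}\le r_k$ for every $k\ge 2$. Combined with $r_j\ge 1$ for $j\le d$, the sequence $r_2\ge r_3\ge\dots\ge r_d\ge 1$ is non-increasing with values in $\{1,2\}$, which is exactly the asserted shape with $s=\max\{j:r_j=2\}$.

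The only genuinely subtle point in (b) is that one cannot use an arbitrary generator: a given $z$ may well satisfy only $\dim\bigl((z\mm+\mm^3)/\mm^3\bigr)=1$, so the determinant computation (choosing a \emph{generic} combination $u+tv$) is what is really doing the work. Everything else — the two inductions and the Nakayama bounds — is routine.
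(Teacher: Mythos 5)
Your argument is correct, but it follows a noticeably different route from the paper's, most visibly in part (b). For (a) the skeleton is the same (reduce to showing $z^k w\in\KK z^{k+1}+\mm^{k+2}$), but where you run the polarization trick with a formal parameter $t$ and divide by $k$ (so you genuinely use $\operatorname{char}\KK=0$, a standing hypothesis), the paper simply writes $z^{k-1}w\equiv cz^k\pmod{\mm^{k+1}}$ and multiplies by $z$ --- a two-line, characteristic-free version of the same collapse. For (b) the paper proceeds structurally: it shows $\mm^k/\mm^{k+1}$ is spanned by degree-$k$ monomials in two elements $x,y$, extracts a quadratic relation $h(x,y)\in\mm^3$ from the dependence of $x^2,xy,y^2$, factors $h$ (two cases: distinct factors, giving $xy\in\mm^3$, or a square, giving $y^2\in\mm^3$), concludes $r_i\le 2$, and only then invokes part (a) to pin down the tail. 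You instead build a single generic (superficial) element $z=u+tv$ via the determinant $\alpha t^2+t+\beta$ (using that $\KK$ is infinite), get $\mm^2=z\mm+\mm^3$ and hence surjections $\mm^k/\mm^{k+1}\twoheadrightarrow\mm^{k+1}/\mm^{k+2}$, so the sequence is non-increasing from index $2$ on; together with the Nakayama bound $r_j\ge 1$ for $j\le d$ this gives the asserted shape directly, with no case analysis and without even needing part (a). Your version is shorter and more conceptual; the paper's buys explicit two-element monomial spanning sets of the graded pieces (the normal forms $xy\in\mm^3$ or $y^2\in\mm^3$), i.e.\ structural information beyond the numerics, and its part (a) works over any field. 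Both proofs use the standing field hypotheses somewhere (the paper factors a binary quadratic, you choose $t$ off a finite root set and invert $k$), so nothing is lost in practice.
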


\begin{proof}

Both statements follow from the classical theorem of Macaulay, see \cite{Mac}. However, they can be proved in a more elementary way as follows.
    \begin{enumerate}
        \item By Lemma \ref{degrees}, there is $x\in\mm$ such that $\mm^k/\mm^{k+1} = \langle x^k \rangle$. Let us show that $\mm^{k+1}/\mm^{k+2}$ is generated by $x^{k+1}$. We need to prove that for any $y_1, y_2, \ldots, y_{k+1}\in\mm$ there exists $a\in\KK$ such that $$y_1y_2\ldots y_{k+1} - ax^{k+1}\in \mm^{k+2}.$$
        First, since $\mm^{k}/\mm^{k+1} = \langle x^k\rangle$, we have $y_1y_2\ldots y_k - bx^k \in \mm^{k+1}$ for some $b\in\KK$. This implies $y_1y_2\ldots y_{k+1}- bx^ky_{k+1} \in \mm^{k+2}$. Also, $x^{k-1}y_{k+1} - cx^k \in\mm^{k+1}$ for some $c\in\KK$, so $x^ky_{k+1} - cx^{k+1}\in\mm^{k+2}$. It follows that $y_1y_2\ldots y_ky_{k+1} + cx^{k+1} \in\mm^{k+2}$, which finishes the proof.
        \item By Lemma \ref{degrees}, there exist $x, y\in\mm$ such that $\mm^2/\mm^3 = \langle x^2, y^2\rangle$. First, let us show by induction on $k$ that $\mm^k/\mm^{k+1}$ is spanned by monomials in $x,y$ of degree $k$ for any $k\geq 2$. The base case $k = 2$ follows from the assumption $\dim \mm^2/\mm^3 = 2$. Let us prove the induction step from $k$ to $k + 1$. We need to show that for any $z_1, z_2, \ldots, z_{k+1}$ there exists a homogeneous polynomial $f(x,y)$ of degree $k + 1$ such that $z_1z_2\ldots z_kz_{k+1} - f(x,y) \in\mm^{k+2}$. By induction hypothesis, there exists a homogeneous polynomial $g(x,y)$ of degree $k$ such that $z_1z_2\ldots z_k - g(x,y)\in \mm^{k+1}$. This implies $z_1z_2\ldots z_kz_{k+1} - z_{k+1}g(x,y) \in \mm^{k+2}$. Since the constant term of $g$ is zero (otherwise, the element $z_1z_2\ldots z_k - g(x,y)$ would be invertible) and $z_{k+1}x$, $z_{k+1}y$ can be expressed as polynomials in $x$, $y$, the step is proved.
        
        Next, the three elements $x^2, y^2, xy$ are linearly dependent in $\mm^2/\mm^3$, so there is a quadratic homogeneous polynomial $h$ in two variables such that $h(x,y)\in\mm^3$. Consider the following two cases.

        \textit{Case 1:} the polynomial $h$ is the product of two non-proportional linear factors $\widetilde{x}, \widetilde{y}$. We can replace the variables $x, y$ with $\widetilde{x}, \widetilde{y}$ and for brevity denote $\widetilde{x}, \widetilde{y}$ simply by $x,y$. After that, we have $xy\in\mm^3$. It is easy to see now that $\mm^k/\mm^{k+1} = \langle x^k, y^k \rangle$. Indeed, we already proved that $\mm^k/\mm^{k+1}$ is spanned by $x^k, x^{k-1}y, \ldots, y^k$. Since $xy\in\mm^3$, we have $x^iy^{k-i}\in\mm^{k+1}$ if both $i$ and $k-i$ are positive, so we are done.

        \textit{Case 2:} the polynomial $h$ is a perfect square $\widetilde{y}^2$. Choose any $\widetilde{x}$ that is not proportional to $\widetilde{y}^2$ in $\mm/\mm^2$. Writing simply $x,y$ instead of $\widetilde{x}, \widetilde{y}$, we have $y^2 \in \mm^3$ and $\mm^2/\mm^3 = \langle x^2, xy\rangle$. It follows that $\mm^k/\mm^{k+1} = \langle x^k, x^{k-1}y \rangle$ since $x^{k-i}y^i \in \mm^{k+1}$ for any $i \geq 2$.

        We proved that $r_i \leq 2$ for any $i \geq 2$. Denote by $s$ the minimal index such that $r_{s+1} \ne 2$. Then either $r_{s+1} = 0$ and then $r_{i} = 0$ for all $i\geq s + 1$ or $r_{s+1} = 1$ and then $r_{s+1} = r_{s+2} = \ldots = r_d = 1$ by the first part of the lemma.
    \end{enumerate}
\end{proof}

Next, we need several results on classification of Gorenstein local algebras. In \cite{ElVa}, the case of so-called almost stretched Gorenstein algebras is considered.

\begin{definition}
    A local finite-dimensional algebra with the maximal ideal $\mm$ is called \emph{almost stretched} if the minimal number of generators of $\mm^2$ is $2$.
\end{definition}

By Lemma \ref{alm_str}, the Hilbert-Samuel sequence of an almost stretched local algebra $A$ is $(1, h, 2, \ldots, 2, 1, 1, \ldots, 1)$ meaning that $r_1 = h\geqslant 2$, $r_2 = \ldots = r_t = 2$, $r_{t+1} = \ldots = r_{s} = 1$. In this case, $A$ is called almost-stretched of type $(s,t)$.

In \cite{ElVa}, the authors obtain a classification of Gorenstein almost stretched algebras of type $(s,t)$ for so-called regular pairs $(s,t)$ such that $s \geq 2t-1$. The definition of a regular pair $(s,t)$ introduced in \cite{ElVa} is the following.

\begin{definition}
    A pair $(s,t)$ of positive integers is called \emph{regular} if there is no integer $r$ such that $0\leqslant r \leqslant t - 2$ and $2(r+1) = s - t + 1$.
\end{definition}

Any algebra $A$ with $\dim \mm/\mm^2 = h$ is isomorphic to an algebra of the form $\KK[x_1, x_2,\ldots, x_h]/I$ for some ideal $I\subset \KK[x_1,x_2, \ldots, x_h]$. Let $I, J\subset \KK[x_1,x_2, \ldots, x_h]$ be two ideals. For brevity, we call $I$ and $J$ isomorphic if the algebras $\KK[x_1,x_2, \ldots, x_h]/I$ and $\KK[x_1,x_2, \ldots, x_h]/J$ are isomorphic.

The main result of \cite{ElVa} is the following theorem.

\begin{theorem}\cite[Theorems 2.8 and 3.5]{ElVa}\label{classif}
    Let $A = \KK[x_1, x_2, \ldots, x_h]/I$ be an almost stretched algebra of type $(s,t)$. Suppose that $s\geqslant 2t - 1$. If $(s,t)$ is regular, then $I$ is isomorphic to one of the following ideals:
    $$I_{0,1}, I_{1,1}, \ldots, I_{t-1,1},$$
    where
\begin{equation*}
    I_{r,1} = (\underset{1\leq i < j \leq h,
    (i,j) \ne (1,2)}{x_ix_j}, \underset{3\leq j \leq h}{x_j^2 - x_1^s}, x_2^2 - x_1^{r+1}x_2 - x_1^{s-t+1}, x_1^tx_2).
\end{equation*}
    Moreover, if $s\geqslant 2t$, then these $t$ ideals are pairwise not isomorphic, so we have precisely $t$ isomorphism classes of Gorenstein algebras.
\end{theorem}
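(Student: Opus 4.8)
The plan is to reconstruct the ring‑theoretic skeleton of an almost stretched algebra from Lemma~\ref{alm_str}, to normalize the ``excess'' generators $x_3,\dots,x_h$ so that they become socle‑like, and then to bring the relations among the two essential generators into one of the listed forms, with the hypotheses $s\ge 2t-1$ and regularity controlling both the shape of the normal form and the final count. (The original argument in \cite{ElVa} proceeds via Macaulay's inverse systems; the elementary route sketched here uses only Lemma~\ref{alm_str}.) First, by Lemma~\ref{alm_str}, an almost stretched algebra $A$ of type $(s,t)$ has Hilbert--Samuel sequence $(1,h,2,\dots,2,1,\dots,1)$ with $r_2=\dots=r_t=2$, $r_{t+1}=\dots=r_s=1$ and $\mm^{s+1}=0$; since $s\ge 2t-1>t$ forces $t<s$, the Gorenstein property rules out ``Case~1'' in the proof of that lemma (which would produce a socle of dimension $\ge 2$), so one may choose $x_1,x_2\in\mm$ with $\mm^k/\mm^{k+1}=\langle x_1^k,\,x_1^{k-1}x_2\rangle$ for $2\le k\le t$ and $\mm^k/\mm^{k+1}=\langle x_1^k\rangle$ for $t<k\le s$. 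Completing $x_1,x_2$ to a basis $x_1,\dots,x_h$ of $\mm/\mm^2$, the Gorenstein hypothesis says precisely that $\Soc A=\mm^s=\langle x_1^s\rangle$ is one‑dimensional.

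\emph{Eliminating $x_3,\dots,x_h$.} For $j\ge 3$ every product $x_ix_j$ with $(i,j)\ne(1,2)$ lies in $\mm^3$; replacing each $x_j$ successively by $x_j$ plus a suitable element of $\mm^2$, and using that each $\mm^k/\mm^{k+1}$ is spanned by monomials in $x_1,x_2$, I would push $x_j\mm$ into $\Soc A$, obtaining $x_1x_j=x_2x_j=0$, $x_ix_j=0$ for distinct $i,j\ge 3$, and $x_j^2=c_jx_1^s$. Here $c_j\ne 0$, for otherwise $x_j$ would lie in $\Soc A\setminus\mm^2$, contradicting $\Soc A=\mm^s$; rescaling $x_j$ makes $c_j=1$. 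So all relations involving $x_3,\dots,x_h$ now coincide with those in $I_{r,1}$, and only the relations among $x_1,x_2$ remain to be normalized.

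\emph{The two‑variable core, and the main obstacle.} Since $x_2^2\in\mm^3$, one writes $x_2^2$ as a polynomial of order $\ge 3$ in $x_1,x_2$; substituting this relation into itself and reducing against the monomial bases of the $\mm^k/\mm^{k+1}$, one first derives $x_1^tx_2=0$ (this is where $s\ge 2t-1$ enters: it gives $x_1^{t-1}x_2^2=x_1^s$, whence multiplication by $x_1$ annihilates $x_1^{t-1}x_2$), and then brings the quadratic relation to the shape $x_2^2=a\,x_1^{r+1}x_2+b\,x_1^{s-t+1}$ with $0\le r\le t-1$, the exponent $r+1$ chosen as small as possible over all admissible pairs $(x_1,x_2)$ (the value $r=t-1$ being the degenerate case in which the mixed term is absent, since then $x_1^tx_2=0$). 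The regularity condition $2(r+1)\ne s-t+1$ is exactly what prevents the mixed term $a\,x_1^{r+1}x_2$ from being destroyed by completion of the square $x_2\mapsto x_2-\tfrac{a}{2}x_1^{r+1}$, which would succeed precisely when $2(r+1)=s-t+1$; hence in the regular case $r$ is genuinely determined and $b\ne 0$. A final rescaling $x_1\mapsto\lambda x_1$, $x_2\mapsto\mu x_2$ together with a polynomial correction $x_2\mapsto x_2+g(x_1)$ normalizes $a=b=1$ and yields exactly the ideal $I_{r,1}$. I expect this step to be the real obstacle: one must track how each substitution moves the mixed term and the pure $x_1$‑power term simultaneously, and check at every stage that the Hilbert function is unchanged.

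\emph{Non‑isomorphism for $s\ge 2t$.} It remains to show that $I_{0,1},\dots,I_{t-1,1}$ are pairwise non‑isomorphic when $s\ge 2t$. For this I would extract from the normal form above a numerical isomorphism invariant of $A_r=\KK[x_1,\dots,x_h]/I_{r,1}$ recovering $r$ — morally the exponent $r+1$, characterized intrinsically as the least degree in which a ``mixed'' quadratic relation of the type above is forced, independently of the presentation (equivalently, via a Hilbert‑function computation for a suitable principal quotient of $A_r$, or via the action of $\Aut A_r$ on the space of admissible quadratic relations). The classification produces exactly $t$ classes only under the strict inequality $s\ge 2t$ because for $s=2t-1$ one has $s-t+1=t$, and the border value of $r$ excluded by regularity would otherwise coincide with a neighbouring class; away from that value, i.e. for $s\ge 2t$, all $t$ normal forms $I_{0,1},\dots,I_{t-1,1}$ stay distinct.
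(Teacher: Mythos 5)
The paper does not prove this statement at all --- it is imported verbatim from Elias--Valla \cite{ElVa} --- so your sketch has to stand on its own, and as written it has concrete gaps. The first is the opening structural step: you claim that the Gorenstein property rules out ``Case~1'' in the proof of Lemma~\ref{alm_str} (``which would produce a socle of dimension $\ge 2$''), and from this you take generators with $\mm^k/\mm^{k+1}=\langle x_1^k,\,x_1^{k-1}x_2\rangle$. That claim is false: the complete intersection $\KK[x,y]/(xy,\;y^3-x^s)$, $s\ge 4$, is Gorenstein (its socle is $\langle x^s\rangle$, one-dimensional), is almost stretched of type $(s,2)$, and has $xy=0\in\mm^3$ with $xy$ a product of two independent linear forms --- exactly Case~1. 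The two ``branches'' $x^k,y^k$ can merge at the top, so no socle of dimension $\ge 2$ arises. The desired conclusion (existence of a ``good'' pair of generators) is true, but it is part of Elias--Valla's structure theory and needs an actual argument, e.g.\ via the Gorenstein duality pairing; it does not follow from Lemma~\ref{alm_str} plus Gorensteinness. The same remark applies to the splitting-off of $x_3,\dots,x_h$, which you only assert can be ``pushed'' into the socle.

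The second gap concerns where regularity and the hypothesis $s\ge 2t$ really enter. Your explanation --- that regularity prevents the mixed term from being removed by $x_2\mapsto x_2-\tfrac a2 x_1^{r+1}$ --- cannot be right as stated, because that substitution destroys the other relation: $x_1^t\bigl(x_2-\tfrac a2x_1^{r+1}\bigr)=-\tfrac a2x_1^{t+r+1}\ne 0$ when $t+r+1\le s$. The actual phenomenon, visible in the paper's own $n=9$ discussion, is that when $2(r+1)=s-t+1$ the rescaling $x_1\mapsto\lambda x_1$, $x_2\mapsto\lambda^{r+1}x_2$ fixes the coefficient of $x_1^{s-t+1}$, so it cannot be normalized to $1$ and a continuous modulus appears (the family $A_5^9(c)$); regularity is what makes this normalization possible, and proving it, together with $b\ne 0$, requires tracking all admissible coordinate changes. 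Moreover your interim claim that ``in the regular case $r$ is genuinely determined'' already contradicts the paper: for $n=8$, i.e.\ type $(5,3)$ with $s=2t-1$ and $(s,t)$ regular, one has $I_{1,1}\cong I_{2,1}$. Determination of $r$ is exactly the content of the ``moreover'' part and holds only for $s\ge 2t$; your plan to ``extract a numerical invariant recovering $r$'' is not carried out, so the non-isomorphism half of the theorem is not proved at all.
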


In the following sections, we will see that most Gorenstein algebras corresponding to non-degenerate projective hypersurfaces in $\PP^n$ of degree at least $n-3$ admitting an induced additive action are almost stretched. This is why we need the results listed above. The only exception is the algebras with the Hilbert-Samuel sequence $(1,2,3,1,1,\ldots, 1)$. For this sequence, we prove the following proposition.

\begin{proposition}\label{onetwothree}
    There are no Gorenstein local algebras with the Hilbert-Samuel sequence $(1,2,3,1,1,\ldots, 1)$.
\end{proposition}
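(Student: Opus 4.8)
The plan is to combine the structure theory of Gorenstein algebras of codimension~$2$ with a dimension count in the associated graded ring.

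First I would set up notation. Since $r_1=\dim\mm/\mm^2=2$, the algebra $A$ has embedding dimension~$2$, so $A\cong\KK[[x,y]]/I$ for an ideal $I\subseteq(x,y)^2$ that is primary to $(x,y)$. Passing to the associated graded ring gives $\operatorname{gr}_\mm A\cong\KK[x,y]/J$, where $J=\operatorname{in}(I)$ is the homogeneous ideal generated by the leading forms $\operatorname{in}(h)$ of the elements $h\in I$, and $\dim(\KK[x,y]/J)_k=r_k$ for every~$k$. The hypothesis $r_2=3=\dim\KK[x,y]_2$ forces $J_2=0$, i.e.\ $I$ contains no nonzero element of order $\le 2$; and $r_3=1$ forces $\dim J_3=4-1=3$.

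Next I would invoke the classical fact that an Artinian Gorenstein local ring of codimension~$2$ is a complete intersection (this follows from Serre's theorem, or from the Hilbert--Burch structure theorem applied to the codimension-$2$ perfect ideal $I$). Thus $I=(f,g)$ for a regular sequence $f,g$, and by the previous paragraph their orders $p:=\operatorname{ord}f$ and $q:=\operatorname{ord}g$ satisfy $3\le p\le q$.

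I would then derive a contradiction from the fact that $J_3$ is too small to be three-dimensional. Since $I$ has no element of order $\le 2$, the space $J_3$ is spanned by the leading forms of the elements of $I$ of order exactly~$3$. But an arbitrary element of $I$ has the form $af+bg$, and its homogeneous component of degree~$3$ equals $a(0)\,f_p$ when $p=3$ (and $0$ otherwise) plus $b(0)\,g_q$ when $q=3$ (and $0$ otherwise), where $f_p,g_q$ are the leading forms of $f,g$. Hence $J_3$ lies in the linear span of those of $f_p,g_q$ that are cubic, so $\dim J_3\le 2$ --- contradicting $\dim J_3=3$. The argument is uniform in $d$ (equivalently, in the length of the tail of $1$'s), so no case analysis is needed. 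The delicate point is exactly the appeal to ``Gorenstein of codimension~$2$ is a complete intersection'': without the two-generator description of $I$ this would fail, since the (non-Gorenstein) graded algebra $\KK[x,y]/(x^3,x^2y,xy^2,y^N)$ with $N\ge 4$ does have Hilbert--Samuel sequence $(1,2,3,1,\ldots,1)$.
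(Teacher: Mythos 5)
Your argument is correct, but it follows a genuinely different route from the paper. You reduce the statement to structure theory: writing $A=\KK[[x,y]]/I$, you use the classical theorem that an Artinian Gorenstein quotient of codimension $2$ is a complete intersection (Hilbert--Burch plus duality, or Serre), so $I=(f,g)$ with $\operatorname{ord}f,\operatorname{ord}g\ge 3$ because $r_1=2$ and $r_2=3$ force the initial ideal to vanish in degrees $\le 2$; then the degree-$3$ part of the initial ideal is contained in the span of the (at most two) cubic leading forms of $f$ and $g$, so it has dimension at most $2$, contradicting $\dim J_3=4-r_3=3$. All the steps check out: the identification $\operatorname{gr}_\mm A\cong\KK[x,y]/\operatorname{in}(I)$ with $\dim(\KK[x,y]/\operatorname{in}(I))_k=r_k$ is standard, the complete-intersection theorem applies since an Artinian quotient of $\KK[[x,y]]$ is automatically Cohen--Macaulay of codimension $2$, and your count of $J_3$ is right because $\operatorname{in}(I)$ is generated in degrees $\ge 3$, so its degree-$3$ piece is spanned by initial forms of elements of order exactly $3$, each of which is a constant-coefficient combination of the cubic parts of $f$ and $g$. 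By contrast, the paper argues by hand inside $A$: it chooses a basis $1,x,y,x^2,xy,y^2,x^3,\dots,x^{n-3}$ with $\Soc A=\langle x^{n-3}\rangle$, normalizes $y$ so that $x^2y=0$, deduces $xy^2=ax^{n-3}$ and $y^3=bx^{n-3}$, and then observes that multiplication by $x$ and by $y$ gives two maps $\langle xy,y^2,x^{n-4}\rangle\to\langle x^{n-3}\rangle$ whose kernels must intersect nontrivially, producing a socle of dimension $\ge 2$. Your proof buys uniformity and conceptual clarity at the price of invoking a nontrivial classical theorem, whereas the paper's computation is elementary and self-contained; your closing example $\KK[x,y]/(x^3,x^2y,xy^2,y^N)$ correctly pinpoints that the Gorenstein hypothesis (via the two-generator structure of $I$) is exactly what the argument needs.
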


\begin{proof}
    Let $A$ be an algebra with this Hilbert-Samuel sequence. Take any elements $x, y$ forming a basis of $\mm/\mm^2$ such that $\Soc A = \langle x^{n-3}\rangle$. Then $A$ has the following basis: $$1, x, y, x^2, xy, y^2, x^3, x^4, \ldots, x^{n-3}.$$ Since $\mm^3/\mm^4 = \langle x^3\rangle$, we have the equality $x^2y = x^3f(x)$ for some polynomial $f$. Rewriting this relation as $x^2(y-xf(x))$ and replacing $y$ with $y - xf(x)$ (denoting $y-xf(x)$ again by~$y$ for simplicity), we obtain the relation $x^2y = 0$.

Next, we have $xy^2 = g(x)$ for some polynomial $g(x)$ divisible by $x^3$. Multiplying this equality by $x$, we obtain $0 = x\cdot xy^2 = x^2y^2 = xg(x)$ which implies $$xy^2 = g(x) = ax^{n-3}, a\in\KK.$$

Similarly, $y^3 = h(x)$ and $xy^3 = xh(x)$. But $xy^3 = xy^2\cdot y = ax^{n-3}y = 0$, so $$y^3 = h(x) = bx^{n-3}, b\in\KK.$$

We see that multiplication by $x$ and $y$ defines linear maps from $\langle xy, y^2, x^{n-4}\rangle$ to $\langle x^{n-3}\rangle$. For dimension reasons, the kernels of both maps are at least $2$-dimensional, so they have a non-trivial intersection. This intersection lies inside $\Soc A$, so $A$ is not Gorenstein. The proposition is proved.

\end{proof}

In the conclusion of this section, we discuss the following question. Recall that, by the Hassett-Tschinkel correspondence, projective hypersurfaces in $\PP^n$ with an induced additive actions are in bijection with $H$-pairs $(A, U)$, where $A$ is a local algebra of dimension $n+1$ with the maximal ideal $\mm$ and $U$ is a hyperplane in $\mm$ generating $A$ as a unital algebra. In addition, the hypersurface is non-degenerate if and only if $A$ is Gorenstein and $U$ is complementary to $\Soc A$. It is conjectured that in this case the hypersurface depends only on the algebra $A$ and not on the subspace $U$. Equivalentely, this means that the group $\Aut(A)$ of all automorphisms of the unital algebra $A$ acts transitively on the set of subspaces in $\mm$ complementary to $\Soc A$. Although we are not able to prove this conjecture in general, we do it for all algebras which appear below. Now, we are going to describe the technique that we use for this.

Suppose that $G$ is a unipotent algebraic subgroup of $\Aut(A)$. Denote by $C$ the set of all $(n-1)$-dimensional subspaces of $\mm$ complementary to $\Soc A$. The set $C$ can be interpreted as an affine subspace $\AA^{n-1}$ in $\PP(\mm^*)$. Let $1, S_1, S_2, \ldots, S_n$ be a basis of $A$ such that $\Soc A = \langle S_n\rangle$. We can write any $z\in\mm$ as $z = z_1S_1 + z_2S_2 + \ldots + z_nS_n$ and consider $z_1, z_2, \ldots, z_n$ as a basis of $\mm^*$.

It is convenient to view the action of $G$ on $\mm^*$ in the following way. Let $f\in G$ be any element. Apply $f$ to the expression $z = z_1S_1 + z_2S_2 + \ldots + z_nS_n$:
$$f(z) = z_1f(S_1) + z_2f(S_2) + \ldots + z_nf(S_n)$$
and rewrite each $f(S_i)$ as a linear combination of $S_1, S_2, \ldots, S_n$. After that, the result of the action of $f$ on $z_i$ is the coefficient of $z_i$ in the expression we obtained.

Denote by $H$ the stabilizer of $z_n$ in $G$. The main statement that we will use is the following.

\begin{lemma}\label{orbit}
    Suppose that $\dim G - \dim H = n - 1$. Then $G$ acts transitively on $C$ and hence the corresponding hypersurface does not depend on the choice of the hyperplane in $\mm$ complementary to $\Soc A$.
\end{lemma}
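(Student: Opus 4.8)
The plan is to show that the orbit of any point of $C$ under $G$ is both open and closed in $C$; since $C \cong \AA^{n-1}$ is irreducible, this forces a single orbit. First I would pick a subspace $U \in C$, say the one with defining covector $z_n$ (i.e. $U = \langle S_1, \ldots, S_{n-1}\rangle$), which lies in $C$ precisely because $\Soc A = \langle S_n\rangle$ and $U$ is complementary to it. The condition "$U$ is complementary to $\Soc A$'' translates, on the level of covectors in $\PP(\mm^*)$, into the condition that the coefficient of $z_n$ is nonzero; normalizing it to $1$ identifies $C$ with the affine space $\AA^{n-1}$ whose coordinates are the coefficients of $z_1, \ldots, z_{n-1}$. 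The action of $G$ on $\mm^*$ described just before the lemma then restricts to an action on this $\AA^{n-1}$, and the key observation is that $H$, the stabilizer of the covector $z_n$ in $G$, is exactly the stabilizer of the point $U \in C$: indeed $f \in G$ fixes $z_n$ (as an element of $\mm^*$, not merely projectively, since $f$ is an algebra automorphism and hence preserves $\Soc A$ and acts on the $1$-dimensional quotient, but being unipotent it acts trivially there) if and only if $f$ preserves the hyperplane $U = \{z_n = 0\}$.

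Next I would invoke the orbit–stabilizer theorem in the form valid for actions of algebraic groups on varieties: the orbit map $G/H \to C$, $gH \mapsto g\cdot U$, is an immersion, so the orbit $G\cdot U$ is a locally closed subvariety of $C$ of dimension $\dim G - \dim H$. By hypothesis this dimension equals $n-1 = \dim C$. Since $G$ is unipotent, every orbit of $G$ on an affine (or more generally quasi-affine) variety is closed — this is the standard fact that orbits of unipotent groups acting on affine varieties are closed (Kostant–Rosenlicht). Here $C \cong \AA^{n-1}$ is affine, so $G\cdot U$ is closed in $C$. A closed subvariety of the irreducible variety $C$ having the same dimension as $C$ must be all of $C$; hence $G\cdot U = C$, i.e. $G$ acts transitively on $C$.

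Finally, transitivity of $G \subseteq \Aut(A)$ on $C$ gives transitivity of $\Aut(A)$ on $C$, so any two hyperplanes $U_1, U_2 \subseteq \mm$ complementary to $\Soc A$ are related by an algebra automorphism $\phi$ with $\phi(U_1) = U_2$. By Definition~4 (equivalence of $H$-pairs) the pairs $(A, U_1)$ and $(A, U_2)$ are equivalent, and therefore by the generalized Hassett–Tschinkel correspondence (Theorem~\ref{prth}) together with Theorem~\ref{tgor} they give equivalent induced additive actions, in particular the same hypersurface up to projective isomorphism. This is exactly the claimed independence of the choice of $U$.

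The main obstacle, and the only real content beyond bookkeeping, is justifying that the orbit $G\cdot U$ is closed: the dimension count alone only yields a dense orbit, and one genuinely needs the special feature that $G$ is unipotent (the Kostant–Rosenlicht theorem) to upgrade "dense'' to "everything.'' One should also be slightly careful that the identification of $C$ with $\AA^{n-1}$ is $G$-equivariant — this follows because $G$ acts linearly on $\mm^*$ and, being unipotent, fixes the covector $z_n$ up to the trivial character, so the normalization "coefficient of $z_n$ equals $1$'' is preserved; I would spell this out since it is what makes $C$ affine rather than merely quasi-projective.
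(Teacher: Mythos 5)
Your proposal is correct and follows essentially the same route as the paper: the hypothesis gives an $(n-1)$-dimensional orbit in $C\cong\AA^{n-1}$, the Kostant--Rosenlicht closedness of unipotent-group orbits on affine varieties upgrades this to the whole of $C$, and transitivity of $G\subseteq\Aut(A)$ yields the independence of the hypersurface from $U$. The only difference is that you spell out details the paper leaves implicit (the $G$-equivariant identification of $C$ with $\AA^{n-1}$ and the fact that, by unipotence, the stabilizer of the covector $z_n$ coincides with the stabilizer of the hyperplane), which is a welcome but not essentially different elaboration.
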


\begin{proof}
    The equality $\dim G - \dim H = n - 1$ means that the $G$-orbit of the line $\KK z_{n}$ (viewed as a point in $C \cong \AA^{n-1}$) is $(n-1)$-dimensional. Since any orbit of a unipotent group acting on an affine variety is closed by \cite[Section 1.3]{PV}, it follows that $G$ acts transitively on $C$, i.e. any subspace of $\mm$ complementary to $\Soc A$ can be sent to any other such subspace by an automorphism from $G\subseteq \Aut(A)$. It follows that the hypersurface corresponding to $A$ does not depend on the choice of $U$.
\end{proof}

An example of application of Lemma \ref{orbit} can be found in \cite[Section 6]{ABeZa} for the algebra $A = \KK[x]/(x^{n+1})$. More examples are given in this paper in the following sections.

\section{Hypersurfaces in $\PP^n$ of degree $n-1$}
\label{codim_one}

In this section, we classify non-degenerate hypersurfaces in $\PP^n$ ($n \geq 3$) of degree $n - 1$ with an induced additive action. The main result is the following theorem. 

\begin{theorem}
    For any $n\geqslant 3$, there is exactly one, up to isomorphism, non-degenerate hypersurface of degree $n-1$ in $\PP^n$ admitting an induced additive action. The corresponding Gorenstein local algebra is $$\KK[x,y]/(xy, y^2 - x^{n-1}).$$
\end{theorem}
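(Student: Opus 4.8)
The plan is to translate the statement, via Theorem~\ref{tgor}, into a question about Gorenstein local algebras, pin down the Hilbert--Samuel sequence by a dimension count, read off the algebra from the classification of almost stretched algebras, and then dispose of the dependence on $U$ by means of Lemma~\ref{orbit}.

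\emph{Step 1: reduction and the Hilbert--Samuel sequence.} By Theorem~\ref{tgor}, a non-degenerate hypersurface of degree $n-1$ in $\PP^n$ with an induced additive action corresponds to an $H$-pair $(A,U)$ where $A$ is a Gorenstein local algebra of dimension $n+1$ with $\Soc A=\mm^{n-1}$; in particular $\mm^{n-1}\ne 0=\mm^{n}$ and $\dim\mm^{n-1}=1$. The length of such an $A$ is $n$, so its Hilbert--Samuel sequence is $r_0,r_1,\dots,r_{n-1}$ with $r_0=r_{n-1}=1$ and $\sum_i r_i=n+1$; hence $r_1+\dots+r_{n-2}=n-1$ is a sum of $n-2$ positive integers, so exactly one of them equals $2$ and the rest equal $1$. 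By Lemma~\ref{alm_str}(a), once some $r_k$ equals $1$ all later terms equal $1$, so the unique $2$ must sit at $r_1$: the sequence is forced to be $(1,2,1,1,\dots,1)$, and $A$ is almost stretched of type $(s,t)=(n-1,1)$.

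\emph{Step 2: applying the classification.} The pair $(n-1,1)$ is regular, since the condition in the definition of regularity would need an integer $r$ with $0\le r\le t-2=-1$, which is impossible; moreover $s=n-1\ge 1=2t-1$, and in fact $s=n-1\ge 2=2t$ because $n\ge 3$. By Theorem~\ref{classif}, $A$ is isomorphic to $\KK[x_1,x_2]/I_{0,1}$ --- the only ideal in the list, as $t=1$ --- and this is the unique isomorphism class. Substituting $h=2$, $s=n-1$, $t=1$, $r=0$ into the formula for $I_{r,1}$, the products $x_ix_j$ with $(i,j)\ne(1,2)$ and the relations $x_j^2-x_1^{s}$ with $3\le j\le h$ are vacuous, leaving $x_1x_2$ and $x_2^2-x_1x_2-x_1^{n-1}$; modulo $x_1x_2$ the latter becomes $x_2^2-x_1^{n-1}$. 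Writing $x,y$ for $x_1,x_2$ gives $A\cong\KK[x,y]/(xy,\,y^2-x^{n-1})$. Conversely one checks directly that this algebra has $\KK$-basis $1,x,x^2,\dots,x^{n-1},y$ (so $\dim A=n+1$), that $\mm^k=\langle x^k,\dots,x^{n-1}\rangle$ for $k\ge 2$ (so its Hilbert--Samuel sequence is indeed $(1,2,1,\dots,1)$), and that $\Soc A=\langle x^{n-1}\rangle=\mm^{n-1}$ is one-dimensional, so $A$ is Gorenstein. Since $\mm^{n-1}\subseteq\mm^2$ (here $n\ge 3$), every complement $U$ of $\Soc A$ in $\mm$ satisfies $U+\mm^2=\mm$, hence generates $A$, so $(A,U)$ is a genuine $H$-pair and by Theorem~\ref{tgor} yields a non-degenerate hypersurface of degree $n-1$ in $\PP^n$.

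\emph{Step 3: independence of $U$ and the main obstacle.} It remains to show the hypersurface is independent of $U$, i.e. that $\Aut(A)$ acts transitively on the complements of $\Soc A$ in $\mm$; for this I will apply Lemma~\ref{orbit}. Take the basis $1,S_1,\dots,S_n$ of $A$ with $S_1=x$, $S_2=y$, $S_i=x^{i-1}$ for $3\le i\le n$, so $\Soc A=\langle S_n\rangle$, and let $z_1,\dots,z_n$ be the dual coordinates on $\mm$. Using $xy=0$, $y^2=x^{n-1}$ and $x^{n}=0$ one verifies that $x\mapsto x+cx^{k}$ (for each $2\le k\le n-1$) and $y\mapsto y+cx^{n-1}$ define automorphisms of $A$; let $G\subseteq\Aut(A)$ be the subgroup they generate. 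Each generator induces the identity on every quotient $\mm^{i}/\mm^{i+1}$, so $G$ is unipotent, and it is generated by $n-1$ one-parameter subgroups, whence $\dim G\le n-1$. Tracking the action on the socle coordinate $z_n$, the automorphism $x\mapsto x+cx^{n-1}$ sends $z_n$ to $z_n+cz_1$, the automorphism $y\mapsto y+cx^{n-1}$ sends it to $z_n+cz_2$, and for $2\le k\le n-2$ the automorphism $x\mapsto x+cx^{k}$ sends $z_n$ to $z_n+(n-k)c\,z_{n-k+1}+(\text{higher-order terms in }c)$; thus $z_n$ can be pushed in the $n-1$ independent directions $z_1,\dots,z_{n-1}$, so the orbit map $G\to C$ is submersive at the identity, its image (the $G$-orbit of $z_n$) is $(n-1)$-dimensional, and $\dim G-\dim H=n-1$ where $H$ is the stabilizer of $z_n$. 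Lemma~\ref{orbit} then finishes the proof. Steps~1 and~2 are essentially bookkeeping with the Hilbert--Samuel sequence together with a substitution into the result of \cite{ElVa}; the main obstacle is Step~3, namely producing enough explicit automorphisms of $\KK[x,y]/(xy,\,y^2-x^{n-1})$ and organizing the computation of their action on $z_n$ so that the $(n-1)$-dimensionality of the orbit becomes transparent.
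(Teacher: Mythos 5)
Your argument is correct, and its skeleton (Theorem~\ref{tgor}, forcing the Hilbert--Samuel sequence $(1,2,1,\dots,1)$, identifying the algebra, then Lemma~\ref{orbit}) matches the paper's; the differences are in how the middle and last steps are executed. In Step~2 you invoke Theorem~\ref{classif} with $(s,t)=(n-1,1)$, $h=2$, whereas the paper derives the relations $xy=0$, $y^2=x^{n-1}$ by a short hands-on normal-form computation (choose $x$ with $\mm^{n-1}=\langle x^{n-1}\rangle$, replace $y$ by $y-xf(x)$, then use Gorensteinness to pin down $y^2$). Your route is legitimate within the paper's conventions --- the paper itself applies Theorem~\ref{classif} with $t=1$ for the sequences $(1,3,1,\dots,1)$ and $(1,4,1,\dots,1)$ --- but note that with the paper's literal definition an algebra with $r_2=1$ has $\mm^2$ principal, so calling it ``almost stretched of type $(n-1,1)$'' is a slight abuse; the paper's elementary derivation sidesteps this edge case and simultaneously produces the basis and relations used later, so it buys self-containedness at no extra length. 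In Step~3 you use the same family of automorphisms as the paper but organize the verification infinitesimally: instead of exhibiting the full $(n-1)$-dimensional group $f(x)=x+a_2x^2+\dots+a_{n-1}x^{n-1}$, $f(y)=y+bx^{n-1}$ and checking that the stabilizer of $z_{n-1}$ is trivial, you show the differential of the orbit map at the identity hits all $n-1$ directions $z_1,\dots,z_{n-1}$, which gives $\dim G-\dim H=n-1$ directly; this is a perfectly valid (and arguably cleaner) way to feed Lemma~\ref{orbit}. One small slip: the inference ``$G$ is generated by $n-1$ one-parameter subgroups, whence $\dim G\le n-1$'' is not valid in general (a group generated by few one-parameter unipotent subgroups can have larger dimension); fortunately you never use this bound, since your orbit-dimension computation alone yields $\dim G-\dim H=n-1$, and your justification that $G$ is a unipotent algebraic subgroup (trivial action on each $\mm^i/\mm^{i+1}$) is the right one.
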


\begin{proof}

Let $A$ be a local Gorenstein algebra corresponding to such a hypersurface. Then we have $\dim A = n + 1$, $\length(A) = n$, so the Hilbert-Samuel sequence $(r_0, r_1, \ldots, r_{n-1})$ is $(1,2,1,1,\ldots, 1)$. Indeed, $\dim A - \length(A) = 1$ if and only if all numbers $r_i$ except one are equal to $1$, while the remaining one equals $2$. If $r_1 = 1$, then $r_i = 1$ for all $i$, so we have $r_1 = 2$. This means that $A$ is isomorphic to a quotient $\KK[x,y]/I$, where $I$ is an ideal, $x$ and~$y$ form a basis of $\mm/\mm^2$, and $\Soc A = \mm^{n-1}$. By Lemma \ref{degrees}, there is an element $z\in\mm\setminus \mm^2$ such that $\mm^{n-1} 
= \langle z^{n-1}\rangle$. We can assume that $z = x$, so $A$ has the following basis: $1, x, y, x^2, x^3, \ldots, x^{n-1}$. 

Since $xy\in\mm^2$, we have $xy = x^2f(x)$, where $f$ is some polynomial. Rewriting this equality as $x(y - xf(x)) = 0$, we can replace $y$ with $\tilde{y} = y - xf(x)$ obtaining the relation $x\tilde{y} = 0$. For simplicity of notation, we will write just $y$ instead of $\tilde{y}$.

Next, we have $y^2 = x^kg(x)$, where $g(x)$ is a polynomial such that $g(0)\ne 0$ and $k\geq 2$. Multiplying by $x$, we obtain $xy^2 = x^{k+1}g(x)$. But $xy^2 = xy\cdot y = 0$, which implies $k = n-1$. So, $y^2 = ax^{n-1}$ for some $a\in\KK$. The coefficient $a$ cannot be zero, since otherwise $y\in\Soc A$ and $A$ is not Gorenstein. So, $a\ne 0$, and, multiplying $x$ by a suitable coefficient, we arrive at the relation $y^2 = x^{n-1}$. So, $A$ is isomorphic to $\KK[x,y]/(xy, y^2 - x^{n-1})$, and it is easy to check that this algebra is indeed Gorenstein.

This proves that there is only one, up to isomorphism, Gorenstein algebra of dimension $n + 1$ and length $n$. It remains to show that the corresponding non-degenerate hypersurface does not depend on the choice of the hyperplane $U\in\mm$ complementary to $\Soc A = \langle x^{n-1}\rangle$. To do this, we are going to apply Lemma \ref{orbit}. Consider the following maps $f$ from $\KK[x,y]$ to~$\KK[x,y]$:
\begin{equation*} f(x) = x + a_2x^2 + a_3x^3 + \ldots + a_{n-1}x^{n-1}, \quad f(y) = y + bx^{n-1},
\end{equation*}
where $a_2, a_3, \ldots, a_{n-1}, b \in \KK$. It is easy to check that $f(I) \subseteq I$, so any such $f$ defines an automorphism of the algebra $A$. The composition of any two maps of this form is also a map of this form, so all such maps form an $(n-1)$-dimensional unipotent linear algebraic subgroup $G$ of $\Aut(A)$. We can write any $z\in\mm$ as $z = z_1x + z_2x^2 + \ldots + z_{n-1}x^{n-1} + wy$. The stabilizer of the form $z_{n-1}$ in the group~$G$ is trivial. Indeed, let us apply $f$ to $z = z_1x + z_2x^2 + \ldots + z_{n-1}x^{n-1} + wy$. If at least one of the coefficients $a_2, a_3, \ldots, a_{n-1}$ does not vanish, then $z_{n-1}$ is sent to the expression $z_{n-1} + (n - k)a_kz_{n-k+1} + \ldots \ne z_{n-1}$, where $k$ is the minimal index such that $a_k \ne 0$. If $a_2 = a_3 = \ldots = a_{n-1} = 0$, then $z_{n-1}$ is sent to $z_{n-1} + wb$, so $b$ also vanishes. So, we are done by Lemma \ref{orbit}.
\end{proof}

Let us write down the equation of the hypersurface corresponding to the algebra $\KK[x,y]/(xy, y^2 - x^{n-1})$. By \cite[Theorem 2.14]{AZa}, it has the form
$$z_0^{n-1}\pi\Big(\ln\big(1 + \frac{z}{z_0}\big)\Big) = \pi\Big(\sum_{k=1}^{n-1}\frac{(-1)^{k-1}}{k}z_0^{n-k-1}(z_1x + z_2x^2 + \ldots + z_{n-1}x^{n-1} + w_1y\big)^k\Big),$$
where $\pi\colon \mm \to \mm/U_0$ is the projection. This gives the equation
$$\sum_{k=1}^{n}\frac{(-1)^{k-1}}{k}z_0^{n-k}\sum_{j_1 + \ldots + j_k = n}c_jz_{j_1}\ldots z_{j_k} - \frac{1}{2}z_0^{n-3}w_1^2 = 0.$$

For example, for $n = 4$ and $n = 6$ we have the equations, respectively:
$$z_0^2z_3 - z_0z_1z_2 - \frac{1}{2}z_0w_1^2 + \frac{1}{3}z_1^3 = 0,$$
$$z_0^4z_5-z_0^3z_1z_4 - z_0^3z_2z_3 - \frac{1}{2}z_0^2w_1^2 + z_0^2z_1^2z_3 + z_0^2z_1z_2^2 - z_0z_1^3z_2 + \frac{1}{5}z_1^5 = 0.$$

It follows from \cite[Proposition 3]{ABeZa} that this hypersurface is normal only for $n = 3, 4$.

\section{Hypersurfaces in $\PP^n$ of degree $n-2$}
\label{codim_two}

In this section, we consider the case of hypersurfaces in $\PP^n$ ($n\geqslant 4$) of degree $n-2$. The main result of this section is the following theorem.

\begin{theorem}
    For any $n\geqslant 6$, there are, up to isomorphism, exactly three non-degenerate hypersurfaces of degree $n-2$ in $\PP^n$ admitting an induced additive actions. The corresponding Gorenstein local algebras are 
\begin{equation*}
\KK[x,y]/(y^2 - xy - x^{n-3}, x^2y),\quad
\KK[x,y]/(y^2 - x^{n-3}, x^2y) \text{ and}
\end{equation*}
\begin{equation*}
\KK[x,y]/(xy, xz, yz, y^2 - x^{n-2}, z^2 - x^{n-2}).
\end{equation*}

In $\PP^5$, there are two such hypersurfaces, and the corresponding Gorenstein local algebras are
\begin{equation*}
\KK[x,y]/(xy, x^3 - y^3)\text{ and } \KK[x,y]/(x^3, y^2).
\end{equation*}

In $\PP^4$, there is one such hypersurface isomorphic to the non-degenerate quadric, and the corresponding Gorenstein local algebra is
$$\KK[x,y,z]/(yz, xz, z^2 - x^2, y^2 - x^2).$$
\end{theorem}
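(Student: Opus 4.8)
The plan is to translate everything into local algebra via Theorem \ref{tgor}: non-degenerate hypersurfaces of degree $n-2$ in $\PP^n$ admitting an induced additive action correspond to $H$-pairs $(A,U)$ in which $A$ is a Gorenstein local algebra of dimension $n+1$ with socle $\mm^{n-2}$ and $U$ is a hyperplane in $\mm$ with $\mm=U\oplus\Soc A$. Since a non-degenerate hypersurface carries at most one induced additive action, it suffices to (i) classify such algebras $A$ up to isomorphism, and (ii) for each $A$, check by Lemma \ref{orbit} that the resulting hypersurface is independent of the choice of $U$.

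Step (i) starts by pinning down the Hilbert–Samuel sequence. From $\dim A=n+1$ and $\length(A)=n-1$ the sequence $(r_0,\dots,r_{n-2})$ consists of $n-1$ positive integers summing to $n+1$, with $r_0=r_{n-2}=1$, so it is a string of ones with a total excess of $2$. Lemma \ref{alm_str} forces $r_1\ge 2$ and then rules out everything except the two profiles $(1,3,1,\dots,1)$ and $(1,2,2,1,\dots,1)$. (When $n=4$ the profile $(1,2,2)$ would give $\dim\mm^2=2$, contradicting that $\Soc A=\mm^2$ is one-dimensional, so only $(1,3,1)$ survives and yields the non-degenerate quadric with algebra $\KK[x,y,z]/(yz,xz,z^2-x^2,y^2-x^2)$.)

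For the profile $(1,2,2,1,\dots,1)$ the algebra is almost stretched of type $(s,t)=(n-2,2)$. For $n\ge 6$ one has $s=n-2\ge 2t=4$ and the pair $(n-2,2)$ is regular (the forbidden relation $2(r+1)=s-t+1$ with $0\le r\le t-2=0$ would force $n=5$), so Theorem \ref{classif} gives exactly $t=2$ pairwise non-isomorphic algebras; unwinding the presentations $I_{0,1},I_{1,1}$ with $h=2$ and reducing the defining relations modulo $x_1^2x_2$ produces $\KK[x,y]/(y^2-xy-x^{n-3},x^2y)$ and $\KK[x,y]/(y^2-x^{n-3},x^2y)$. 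For $n=5$ the pair $(3,2)$ is not regular, Theorem \ref{classif} does not apply, and I would classify the $(1,2,2,1)$-algebras directly: a basis $x,y$ of $\mm/\mm^2$, Lemma \ref{degrees}, and the constraints $r_2=2$, $r_3=1$ pin down a short list of normal forms collapsing to $\KK[x,y]/(xy,x^3-y^3)$ and $\KK[x,y]/(x^3,y^2)$. For the profile $(1,3,1,\dots,1)$ I would argue by hand: by Lemma \ref{degrees} pick $x$ with $\mm^2/\mm^3=\langle x^2\rangle$, so $\mm^k=\langle x^k\rangle$ for $k\ge 2$; complete to a basis $x,y,z$ of $\mm/\mm^2$ and replace $y,z$ by combinations with $x,x^2,\dots$ to achieve $xy=xz=0$; multiplying the relations for $y^2,yz,z^2$ by $x$ then forces all three into $\mm^{n-2}=\Soc A$, and the Gorenstein condition becomes exactly the non-degeneracy of the binary quadratic form they define, which I can diagonalize to obtain the single algebra $\KK[x,y,z]/(xy,xz,yz,y^2-x^{n-2},z^2-x^{n-2})$.

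For step (ii) I would, for each algebra on the list, exhibit an explicit unipotent subgroup $G\subseteq\Aut(A)$ of "triangular" substitutions $x\mapsto x+\cdots$, $y\mapsto y+\cdots$, compute the stabilizer $H$ of the linear coordinate dual to $\Soc A$, verify $\dim G-\dim H=n-1$, and conclude by Lemma \ref{orbit}; the equation of the hypersurface is then read off from \eqref{hyp_eq}, non-degeneracy is confirmed by checking that the partial derivatives of the defining polynomial are linearly independent, and normality is settled by the criterion used in \cite{ABeZa}. I expect two main obstacles. First, the classification input from \cite{ElVa} (Theorem \ref{classif}) is available only for regular types with $s\ge 2t-1$, so the small projective spaces $\PP^4$ and $\PP^5$ fall outside it and must be handled by a separate, somewhat delicate hand-classification. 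Second, verifying the $U$-independence via Lemma \ref{orbit} requires, case by case, producing enough algebra automorphisms and computing the orbit dimension; this is where the bulk of the bookkeeping lies and where errors are most likely.
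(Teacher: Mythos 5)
Your overall route is the same as the paper's: reduce via Theorem \ref{tgor} to Gorenstein algebras of dimension $n+1$ with socle $\mm^{n-2}$, split by Hilbert--Samuel sequence into $(1,2,2,1,\ldots,1)$ and $(1,3,1,\ldots,1)$ using Lemma \ref{alm_str}, apply Theorem \ref{classif} to the almost stretched type $(n-2,2)$ for $n\geqslant 6$, treat $n=4,5$ separately, and settle independence of the hyperplane $U$ by Lemma \ref{orbit}. The genuine differences are minor and legitimate: for the profile $(1,3,1,\ldots,1)$ you give a direct normal-form argument (kill $xy,xz$, force $y^2,yz,z^2$ into the socle, diagonalize the resulting nondegenerate binary form) where the paper simply invokes Theorem \ref{classif} with $t=1$; and for the $(1,2,2,1)$ algebras in $\PP^5$ you propose a hand classification where the paper cites the dimension-at-most-$6$ classification in \cite{AZa}. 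Both substitutions are workable and arguably more self-contained.

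Two caveats. First, your step (ii) is only a plan: exhibiting the unipotent subgroups of $\Aut(A)$ and computing the stabilizers is exactly where the bulk of the paper's proof lies, so as written your proposal is an outline there rather than a proof. Second, and more substantively, your treatment of the profile $(1,3,1,\ldots,1)$ is carried out for every $n\geqslant 4$ and, for $n=5$, produces the six-dimensional Gorenstein algebra $\KK[x,y,z]/(xy,xz,yz,y^2-x^3,z^2-x^3)$ with socle $\mm^3$; by Theorem \ref{tgor} this yields a third non-degenerate cubic in $\PP^5$, which contradicts the count of two that the statement (and your sketch) asserts for $\PP^5$, and you never reconcile the two. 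This algebra is genuinely Gorenstein with one-dimensional socle, so it cannot simply be excluded; a complete proof must either explain why it does not contribute (it appears that it does) or observe that the $\PP^5$ count in the statement needs this extra algebra. Note that the paper's own Case 2 is likewise stated for all $n\geqslant 4$, so this tension between the proof and the stated $\PP^5$ count is already present in the paper; but since you are proving the statement as given, leaving the discrepancy unaddressed is a real gap in the proposal.
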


\begin{proof}

Since $\dim A - \length(A) = 2$, it follows from Lemma \ref{alm_str} that the Hilbert-Samuel sequence $(r_0, r_1, \ldots, r_{n-2})$ of $A$ is of one of the following forms: $(1,2,2,1,\ldots, 1)$ or $(1,3,1,1,\ldots, 1)$. We consider the two cases separately.

\textit{Case 1:} $(r_0, r_1, \ldots, r_{n-2}) = (1, 2, 2, 1, \ldots, 1).$ This means that $A$ is an almost stretched algebra of type $(s,t) = (n-2,2)$. In this case, the inequality $s = n-2 \geqslant 2t = 4$ holds for $n\geqslant 6$. Moreover, the pair $(n-2,2)$ is regular unless $n = 5$. So, Theorem \ref{classif} can be applied for $n\geqslant 6$. Writing $x,y$ instead of $x_1, x_2$, we have
$$I_{0,1} = (y^2 - xy - x^{n-3}, x^2y), \quad I_{1,1} = (y^2 - x^2y-x^{n-3}, x^2y) = (y^2 - x^{n-3}, x^2y),$$
so $A$ is isomorphic to $A_0^2 = \KK[x,y]/(y^2-xy-x^{n-3}, x^2y)$ or $A_1^2 = \KK[x,y]/(y^2 - x^{n-3}, x^2y)$. Now, we are going to show that the hypersurfaces corresponding to $A_0^2$ and $A_1^2$ do not depend on the choice of the hyperplane in $\mm$ complementary to $\mm^{n-2} = \langle x^{n-2}\rangle$.

The algebra $A_0^2$ has the basis $1, x, y, x^2, y^2, x^3, \ldots x^{n-3}$. The following relations hold:
$$xy^2 = x(xy + x^{n-3}) = x^2y + x^{n-2} = x^{n-2},$$
$$y^3 = y(xy + x^{n-3}) = xy^2 + x^{n-3}y = x^{n-2},$$
and it follows that $x^iy^j = 0$ for all pairs of non-negative integers $(i,j)$ such that $i + j \geqslant 4$ unless $j = 0$. Consider the following maps $f\colon \KK[x,y] \to \KK[x,y]$:
\begin{equation*}
f(x) = x + a_2x^2 + a_3x^3 + \ldots + a_{n-2}x^{n-2}, \quad
f(y) = y + b_1y^2 + b_{n-3}x^{n-3} + b_{n-2}x^{n-2},
\end{equation*}
where $a_2, \ldots, a_{n-2}, b_1, b_{n-3}, b_{n-2}\in\KK$. It can be checked by a direct computation that the relation $x^2y = 0$ is always preserved by $f$, while the relation $y^2 = xy + x^{n-3}$ is preserved if and only if $b_1 - b_{n-3} = (n-3)a_2$. So, if $b_1 - b_{n-3} = (n-3)a_2$, then each map of this form induced an automorphism of the algebra $A_0^2$. It is also easy to check that the composition of two maps of this form is also a map of this form, so all such maps form an $(n-1)$-dimensional unipotent linear algebraic subgroup of $\Aut(A_0^2)$. Next, we apply $f$ to $$z = z_1x + z_2x^2 + \ldots + z_{n-2}x^{n-2} + w_1y + w_2y^2.$$
If $f$ stabilizes the form $z_{n-2}$, then $a_2 = a_3 = \ldots = a_{n-2} = 0$ by the same argument as in the proof of Theorem \ref{codim_one}. So, $z_{n-2}$ is sent to
$$z_{n-2} + w_1b_{n-2} + 2w_2b_1,$$
and $b_1 = b_{n-2} = 0$, which implies $b_{n-3} = b_1 - (n-3)a_2 = 0$. So, the stabilizer of $z_{n-2}$ in $G$ is trivial and this concludes the proof by Lemma \ref{orbit}.
\bigskip

The algebra $A_1^2$ has the same basis as $A_0^2$, i.e. $1, x, y, x^2, xy, x^3, \ldots, x^{n-2}$, and satisfies the following relations:
$$xy^2 = x\cdot x^{n-3} = x^{n-2},\quad y^3 = y\cdot x^{n-3} = 0.$$

Consider the maps of the form
\begin{equation*} f(x) = x + a_2x^2 + a_3x^3 + \ldots + a_{n-2}x^{n-2}, \quad
f(y) = y + b_1xy + b_{n-3}x^{n-3} + b_{n-2}x^{n-2}.
\end{equation*}

Now, $f$ preserves $I_{1,1}$ if and only if $(n-3)a_2 = 2b_1$, giving, as in the case of $A_0^2$, an $(n-1)$-dimensional unipotent subgroup of $\Aut(A_1^2)$. Again, we apply $f$ to
$$z = z_1x + z_2x^2 + \ldots + z_{n-2}x^{n-2} + w_1y + w_2xy.$$
If $f$ stabilizes $z_{n-2}$, then by $a_2 = a_3 = \ldots = a_{n-2} = 0$ by the same arguments as before implying also $b_1 = 0$. So, $z_{n-2}$ is sent to 
$$z_{n-2} + b_{n-2}w_1 + b_{n-3}w_2,$$
and $b_{n-2} = b_{n-3} = 0$. So, the stabilizer of $z_{n-2}$ in $G$ is trivial and we are done as before.

\bigskip

The only cases not covered by Theorem \ref{classif} are $n = 4$ and $n = 5$. If $n = 4$, then the Hilbert-Samuel sequence is $(1,2,2)$ and the algebra $A$ is not Gorenstein. If $n = 5$, then $\dim A = 6$ and the Hilbert-Samuel sequence of $A$ is $(1,2,2,1)$. All finite-dimensional local algebras of dimension up to $6$ are classified in \cite[Table 1]{AZa}. According to Table 1 from this work, there are two Gorenstein local algebra with the Hilbert-Samuel sequence $(1,2,2,1)$, namely $B_1 = \KK[x,y]/(xy, x^3 - y^3)$ and $B_2 = \KK[x,y]/(x^3, y^2)$.

For $B_1$, consider the automorphisms of the form
\begin{equation*}
f(x) = x + a_2x^2 + a_3x^3, \quad
f(y) = y + b_2y^2 + b_3y^3.
\end{equation*}

It is easy to see that any such $f$ is indeed an automorphism of $B_1$ and the stabilizer of the form $z_2$ in the unipotent group $G$ consisting of all such automorphisms is trivial. Since $\dim G = 4 = 6 - 2$, we are done by Lemma \ref{orbit} as before.

For $B_2 = \KK[x,y]/(x^3, y^2)$, we first apply the following change of variables: $x = x_1 - \frac{y_1}{3}$, $y = y_1$. Under this change, the algebra $B_2$ turns to $\KK[x_1,y_1]/(x_1^3 - x_1^2y_1, y_1^2)$. For simplicity, we replace $x_1, y_1$ with $x,y$ and work with the algebra $\widetilde{B}_2 = \KK[x,y]/(x^3-x^2y, y^2)$. For this algebra, consider the following automorphisms:
\begin{equation*}
f(x) = x + a_2x^2 + a_3x^3, \quad
f(y) = y + b_2xy + b_3x^3.
\end{equation*}

It is easy to see that any such $f$ is indeed an automorphism of $\widetilde{B}_2$ and all such maps form a unipotent subgroup of $\Aut(\widetilde{B}_2)$. If we apply $f$ to $$z_1x + z_2x^2 + z_3x^3 + z_4y + z_5xy,$$ then $z_3$ goes to
$$z_3 + 2a_2z_2 + a_3z_1 + b_3z_4 + b_2z_5 + a_2z_5$$
implying that the stabilizer of $z_3$ in $G$ is trivial. This again finishes the proof by Lemma \ref{orbit}.

\bigskip

\textit{Case 2:} $(r_0, r_1, \ldots, r_{n-2}) = (1, 3, 1, 1, \ldots, 1).$
Algebras with this Hilbert-Samuel sequence are almost stretched with $h = 3$, $t = 1$ and $s = n - 2$. The inequality $s = n - 2 \geqslant 2t = 2$ holds for all $n\geqslant 4$, so we do not have to consider exceptional cases here. By Theorem~\ref{classif}, there is only one Gorenstein local algebra for each $n\geqslant 4$, namely $A = \KK[x,y,z]/I_{0,1}$, where
$$I_{0,1} = (xz, yz, z^2 - x^{n-2}, y^2 - xy - x^{n-2}, xy) = (xy, xz, yz, y^2 - x^{n-2}, z^2 - x^{n-2}).$$

Consider the $(n-1)$-dimensional unipotent subgroup of $\Aut(A)$ consisting of the following automorphisms:
\begin{equation*}
f(x) = x + a_2x^2 + a_3x^3 + \ldots + a_{n-2}x^{n-2}, \quad f(y) = y + bx^{n-2}, \quad f(z) = z + cx^{n-2}.
\end{equation*}

Similarly to all previous cases, the stabilizer of the form $z_n$ under the action on $$z_1x + z_2x^2 + \ldots + z_{n-2}x^{n-2} + wy + uz$$ in the group $G$ is trivial, so we are done.

\end{proof}

Let us take a closer look, for example, at the hypersurface corresponding to the algebra $\KK[x,y]/(y^2 - xy - x^{n-3}, x^2y)$. By \cite[Theorem 2.14]{AZa}, it has the form
$$z_0^{n-2}\pi\Big(\ln\big(1 + \frac{z}{z_0}\big)\Big) = \pi\Big(\sum_{k=1}^{n-2}\frac{(-1)^{k-1}}{k}z_0^{n-k-2}(z_1x + z_2x^2 + \ldots + z_{n-2}x^{n-2} + w_1y + w_2xy\big)^k\Big),$$
where $\pi\colon \mm \to \mm/U_0$ is the projection. For example, the equations for $n = 5$ and $n = 6$ are the following, respectively:
$$z_0^2z_3 - z_0(z_1z_2 + w_1w_2) + \frac{1}{3}(z_1^3 + w_1^3 + 3z_1w_1^2) = 0,$$
$$z_0^3z_4 - \frac{1}{2}z_0^2(z_2^2 + 2z_1z_3 + 2w_1w_2) + \frac{1}{3}z_0(w_1^3 + 2z_1w_1^2 + 3z_1^2z_2) - \frac{1}{4}z_1^4 = 0.$$

By \cite[Proposition 3]{ABeZa}, these two hypersurfaces are normal. One can check that the hypersurfaces corresponding to the algebra $\KK[x,y]/(x^2y, y^2 - xy - x^{n-3})$ are not normal for~$n \geq 7$.

\section{Hypersurfaces in $\PP^n$ of degree $n - 3$}

By Lemma \ref{alm_str}, Gorenstein local algebras corresponding to non-degenerate hypersurfaces of degree $n-3$ in $\PP^n$ ($n \geq 5$) admitting an induced additive action can have the following Hilbert-Samuel sequences:
\begin{equation*}
(1, 4, 1, 1, 1, \ldots, 1),\quad
(1, 3, 2, 1, 1, \ldots, 1),\quad
(1, 2, 3, 1, 1, \ldots, 1),\quad
(1, 2, 2, 2, 1, \ldots, 1).
\end{equation*}
The third case is not possible by Proposition \ref{onetwothree}. We consider each of the remaining three cases.

\subsection{Hilbert-Samuel sequence $(1,4,1,\ldots, 1)$}

Local algebras with the Hilbert-Samuel sequence $(r_0, r_1, \ldots, r_{n-3}) = (1,4,1,\ldots, 1)$ are almost stretched of type $(s, t) = (n-3,1)$ with $h = 4$. The inequality $s = n - 3 \geqslant 2t = 2$ holds for any $n\geqslant 5$, so Theorem \ref{classif} can be applied. According to Theorem \ref{classif}, the only one Gorenstein local algebra with this Hilbert-Samuel sequence is $A = \KK[x,y,z,t]/I_{0,1}$, where
$$I_{0,1} = (xy, xz, xt, yz, yt, zt, z^2 - x^{n-3}, t^2 - x^{n-3}, y^2 - x^{n-3}).$$
In order to show that the corresponding hypersurface does not depend on the choice of the hyperplane in $\mm$, we consider the following automorphisms of $A$:
\begin{equation*}
\begin{split}
& f(x) = x + a_2x^2 + a_3x^3 + \ldots + a_{n-3}x^{n-3},\\
& f(y) = y + bx^{n-3}, \quad
f(z) = z + cx^{n-3}, \quad
f(t) = t + dx^{n-3},
\end{split}
\end{equation*}
where $a_2, \ldots, a_{n-3}, b,c,d \in \KK$. Using Lemma \ref{orbit}, we conclude that the corresponding hypersurface does not depend on the choice of $U\subset \mm$.

\subsection{Hilbert-Samuel sequence $(1,3,2,1,\ldots, 1)$}

Local algebras with the Hilbert-Samuel sequence $(r_0, r_1, \ldots, r_{n-3}) = (1, 3, 2, 1, \ldots, 1)$ are almost stretched of type $(s,t) = (n-3, 2)$ and $h = 3$. The inequality $s = n-3 \geqslant 2t = 4$ holds for $n\geqslant 7$, so Theorem \ref{classif} applies for all $n \geqslant 7$. In this case, there are two, up to isomorphism, local Gorenstein algebras, namely $A_0^3 = \KK[x,y,z]/I_{0,1}$ and $A_1^3 = \KK[x,y,z]/I_{1,1}$, where
$$I_{0,1} = (xz, yz, z^2 - x^{n-3}, y^2 - xy - x^{n-4}, x^2y),$$
$$I_{1,1} = (xz, yz, z^2 - x^{n-3}, y^2 - x^2y - x^{n-4}, x^2y) = (xz, yz, z^2 - x^{n-3}, y^2 - x^{n-4}, x^2y).$$

The algebra $A_0^3$ is very similar to the algebra $A_0^2$ considered in Section \ref{codim_two}, Case 1. The difference is the new variable $z$ such that $zx = zy = 0$ and $z^2 = x^{n-3}$. To prove the independence of the corresponding projective hypersurface from the choice of the hyperplane in $\mm$, we consider the following automorphisms of $A_0^3$:
\begin{equation*}
\begin{split}
& f(x) = x + a_2x^2 + a_3x^3 + \ldots + a_{n-3}x^{n-3}, \\
& f(y) = y + b_1y^2 + b_{n-4}x^{n-4} + b_{n-3}x^{n-3},\quad
f(z) = z + cx^{n-3}.
\end{split}
\end{equation*}
where $a_2, \ldots, a_{n-2}, b_1, b_{n-4}, b_{n-3}\in\KK$ and $b_1 - b_{n-4} = (n-4)a_2$. The remaining part of the proof is almost the same as for the algebra $A_0^2$.

For the algebra $A_1^3$, the situation is similar. Indeed, the only difference of $A_1^3$ from $A_1^2$ is the new variable $z$ such that $zx = zy = 0$ and $z^2 = x^{n-3}$. To prove that the corresponding hypersurface does not depend on the choice of the hyperplane in $\mm$, we consider the following automorphisms of $A_0^3$:
\begin{equation*}
\begin{split}
& f(x) = x + a_2x^2 + a_3x^3 + \ldots + a_{n-3}x^{n-3}, \\
& f(y) = y + b_1xy + b_{n-4}x^{n-4} + b_{n-3}x^{n-3},\quad
f(z) = z + cx^{n-3},
\end{split}
\end{equation*}
where $(n-4)a_2 = 2b_1$,
and the remaining part of the proof is similar to the one for $A_1^2$.

\bigskip

It remains to consider the exceptional cases $n = 5$ and $n = 6$. The case $n = 5$ is not possible, since an algebra with the Hilbert-Samuel sequence $(1,3,2)$ cannot be Gorenstein. For $n = 6$, the Hilbert-Samuel sequence becomes $(1,3,2,1)$, and the dimension of any such algebra is equal to $7$. All local Gorenstein algebras with dimension up to $9$ are classified in~\cite{Casn}. In particular, it follows from \cite{Casn} that each local Gorenstein algebras with the Hilbert-Samuel sequence $(1,3,2,1)$ is isomorphic to one of the following algebras:
\begin{equation*}
\begin{split}
& B_1 = \KK[x,y,z]/(xy, xz, yz, y^3 - x^3, z^2 - x^3), \\
& B_2 = \KK[x,y,z]/(x^2y - x^3, y^2, xz, yz, z^2 - x^3),\\
& B_3 = \KK[x,y,z]/(x^2y, y^2 - x^2, xz, yz, z^2 - x^3).
\end{split}
\end{equation*}

The change of variables $y_1 = x - y$, $x_1 = x + y$, $z_1 = 2z$ sends the ideal $$(x^2y, y^2 - x^2, xz, yz, z^2 - x^3)$$ to the ideal $(x_1y_1, x_1z_1, y_1z_1, y_1^3 - x_1^3, z_1^2 - x_1^3)$, so the algebras $B_1$ and $B_3$ are isomorphic. At the same time, the algebras $B_1$ and $B_2$ are not isomorphic. Indeed, consider the maps $\varphi_i\colon \mm_i/\mm_i^2 \to \mm_i^2/\mm_i^3$ ($i = 1, 2$) sending $s\in\mm_i/\mm_i^2$ to $s^2$. The equation $\varphi(s) = 0$ defines a one-dimensional subspace in $\mm_1/\mm_1^2$ and a two-dimensional subspace in $\mm_2/\mm_2^2$, so the two algebras are not isomorphic.

To prove that the hypersurface corresponding to $B_1$ does not depend on the choice of the hyperplane complementary to $\Soc B_1$, consider the automorphisms of $B_1$ of the form
\begin{equation*}
f(x) = x + a_2x^2 + a_3x^3, \quad f(y) = y + b_2y^2 + b_3x^3,\quad
f(z) = z + cx^3,
\end{equation*}
while for $B_2$ consider the automorphisms of the form
\begin{equation*}
f(x) = x + a_2x^2 + a_3x^3, \quad
f(y) = y + b_2xy + b_3x^3, \quad
f(z) = z + cx^3.
\end{equation*}

The remaining part of the proof where we apply Lemma \ref{orbit} is the same as in all previous cases.

\subsection{Hilbert-Samuel sequence $(1,2,2,2,1,1,\ldots, 1)$}

Local algebras with the Hilbert-Samuel sequence $(r_0, r_1, \ldots, r_{n-3}) = (1, 2,2,2,1,1,\ldots, 1)$ are almost stretched of type $(s,t) = (n-3,3)$. The inequality $s = n-3 \geqslant 2t = 6$ holds for $n\geqslant 9$ and the pair $(s,t)$ is regular unless $s = 6$ (i.e., $n = 9$) in which case the equality $2(r + 1) = s - t + 1$ holds for $r = 1$. So, for $n \geqslant 10$, there are exactly three isomorphism classes of local Gorenstein algebras of type $(n-3,3)$, namely $B_i = \KK[x,y]/I_{i,1}$, $i = 0, 1, 2$, where
\begin{equation*}\label{3_ideals}
I_{0,1} = (y^2 - xy - x^{n-5}, x^3y),\quad
I_{1,1} = (y^2 - x^2y - x^{n-5}, x^3y),\quad
I_{2,1} = (y^2 - x^3y - x^{n-5}, x^3y).
\end{equation*}

For each of these algebras, we need to establish independence of the corresponding non-degenerate hypersurface from the choice of the hyperplane in $\mm$. Let us start with $B_0$. For this algebra, we have the following relations:
$$xy^2 = x(xy + x^{n-5}) = x^2y + x^{n-4}, \quad y^3 = y(xy + x^{n-5}) = xy^2 + x^{n-5}y = x^2y + x^{n-4},$$
$$x^2y^2 = x^2(xy + x^{n-5}) = x^{n-3}, \quad xy^3 = x^{n-3}, \quad y^4 = x^{n-3}.$$

Consider the following maps $\KK[x,y]\to\KK[x,y]$:
\begin{equation*}
\begin{split}
& f(x) = x + a_2x^2 + a_3x^3 + \ldots + a_{n-3}x^{n-3} + cxy + dx^2y,\\
& f(y) = y + b_1xy + b_2x^2y + b_{n-5}x^{n-5} + b_{n-4}x^{n-4} + b_{n-3}x^{n-3}.
\end{split}
\end{equation*}

The relation $x^3y = 0$ is preserved by any $f$ of this form. As for $y^2 = xy + x^{n-5}$, we need to do the following computations:
$$f(y^2) = y^2 + b_1^2x^2y^2 + 2b_1xy^2 + 2b_2x^2y^2 = y^2 + b_1^2x^{n-3} + 2b_1(x^2y + x^{n-4}) + 2b_2x^{n-3},$$
$$f(xy) = f(x)f(y) = xy + b_1x^2y + b_{n-5}x^{n-4} + b_{n-4}x^{n-3} + a_2x^2y + a_2b_{n-5}x^{n-3} + cxy^2 + cb_1x^2y^2 + dx^2y^2 = $$
$$= xy + (b_1 + a_2 + c)x^2y + (b_{n-5} + c)x^{n-4} + (b_{n-4} + a_2b_{n-5} + b_1c + d)x^{n-3},$$
$$f(x^{n-5}) = x^{n-5} + (n-5)a_2x^{n-4} + \big((n-5)a_3 + \binom{n-5}{2}a_2^2\big)x^{n-3}.$$

This means that $f$ induces an automorphism of $B_0$ if and only if the following relations hold:
\begin{equation}\label{rel_0}
\begin{split}
& 2b_1 = b_1 + a_2 + c,\quad 2b_1 = b_{n-5} + c + (n-5)a_2,\\
& b_1^2 + 2b_2 = b_{n-4} + a_2b_{n-5} + b_1c + d + (n-5)a_3 + \binom{n-5}{2}a_2^2.
\end{split}
\end{equation}
The set of all $f$ satisfying these equations forms an $n$-dimensional unipotent subgroup of the group $\Aut(B_0)$.

Now, consider the action of $f$ on $z = z_1x + z_2x^2 + \ldots + z_{n-3}x^{n-3} + w_1y + w_2xy + w_3x^2y$. Let us compute the stabilizer in $G$ of the form $z_{n-3}$. It is easy to see that for any $k\geq 3$ the coefficient of $x^{n-3}$ in the expansion of $(x + a_2x^2 + a_3x^3 + \ldots + a_{n-3}x^{n-3} + cxy + dx^2y)^k$ does not contain $c$, $d$, so $a_2 = a_3 = \ldots = a_{n-5} = a_{n-3} = 0$ as in all similar examples considered previously and $2a_{n-4} + c^2 = 0$. Clearly, $b_{n-3} = 0$, while the coefficient of $x^{n-3}$ in the expansion of
$$w_2f(x)f(y) + w_3f(x)^2f(y) = w_2(x + a_{n-4}x^{n-4} + cxy + dx^2y)(y + b_1xy+ b_2x^2y + b_{n-5}x^{n-5} + b_{n-4}x^{n-4}) + $$
$$ + w_3(x^2 + 2cx^2y)(y + b_1xy+ b_2x^2y + b_{n-5}x^{n-5} + b_{n-4}x^{n-4})$$
is equal to
$$w_2(b_{n-4} + cb_1 + d) + w_3(b_{n-5} + 2c).$$

Together with (\ref{rel_0}), we arrive at the following equations:
$$2b_1 = b_1 + c, \quad
        2b_1 = b_{n-5} + c,\quad
        b_1^2 + 2b_2 = b_{n-4} + b_1c + d,\quad
        b_{n-4} + b_1c + d = 0,\quad
        b_{n-5} + 2c = 0.$$

The first two equations give us $b_1 = b_{n-5} = c$. From this and from the fifth equation, it follows that $b_1 = b_{n-5} = c = 0$, and the remaining equations give us $b_2 = 0$. So, the system is equivalent to the equation $b_{n-4} + d = 0$, so the stabilizer of $z_{n-3}$ in $G$ is one-dimensional. Since $\dim G = n$, it follows that the orbit of $\KK z_{n-3}$ has the desired dimension $n - 1$, and we are done by Lemma \ref{orbit} as before.
\bigskip

For the algebra $B_{1}$, we have the following relations:
$$xy^2 = x^{n-4}, \quad y^3 = x^2y^2 = x^{n-3}, \quad xy^3 = y^4 = 0,$$
while for $B_2$
$$xy^2 = x^{n-4},\quad x^2y^2 = x^{n-3}, y^3 = xy^3 = 0.$$

Consider the following maps $\KK[x,y]\to\KK[x,y]$:
\begin{equation*}
\begin{split}
& f(x) = x + a_2x^2 + a_3x^3 + \ldots + a_{n-3}x^{n-3},\\
& f(y) = y + b_1xy + b_2x^2y + b_{n-5}x^{n-5} + b_{n-4}x^{n-4} + b_{n-3}x^{n-3}.
\end{split}
\end{equation*}
The relation $x^3y = 0$ is again preserved by any $f$ of this form. As for the second relation, we have
$$f(y^2) = y^2 + b_1^2x^{n-3} + 2b_1x^{n-4} + 2b_2x^{n-3},$$
$$f(x^{n-5}) = x^{n-5} + (n-5)a_2x^{n-4} + ((n-5)a_3 + \binom{n-5}{2}a_2^2)x^{n-3},$$
$$f(x^2y) = x^2y + b_{n-5}x^{n-3}.$$

So, $f$ is an automorphism of $B_1$ if and only if $$2b_1 = (n-5)a_2, \quad b_1^2 + 2b_2 = (n-5)a_3 + \binom{n-5}{2}a_2^2 + b_{n-5}.$$

Similarly, $f$ is an automorphism of $B_2$ if and only if
$$2b_1 = (n-5)a_2,\quad b_1^2 + 2b_2 = (n-5)a_{3} + \binom{n-5}{2}a_2^2.$$

So, the unipotent group $G$ generated by all automorphisms of this form has dimension $n-1$ for both $B_1$ and $B_2$. As for $B_0$, consider the action of $f$ on $$z = z_1x + z_2x^2 + \ldots + z_{n-3}x^{n-3} + w_1y + w_2xy + w_3x^2y.$$
If $f$ stabilizes $z_{n-3}$, then, as before, $b_{n-3} = 0$ and $a_2 = a_3 = \ldots = a_{n-3} = 0$, which implies $b_1 = 0$. Similarly, $b_{n-5} = 0$ implying $b_{2} = 0$. Finally, one shows that $b_{n-4} = 0$, so the stabilizer of $z_n$ in $G$ is trivial and we are done by Lemma \ref{orbit}.
\bigskip

If $n = 9$, then the pair $(s,t) = (6,3)$ is not regular. Almost stretched Gorenstein algebras of type $(s,t)$ such that the pair $(s,t)$ is not regular and $s\geqslant 2t$ are also classified in \cite[Theorems 2.8 and 3.5]{ElVa}. It turns out that the number of isomorphism classes of such algebras is infinite. The case of the particular Hilbert-Samuel sequence $(1,2,2,2,1,1,1)$ is considered separately (Example 2 in Section 5). The corresponding local Gorenstein algebras are the two algebras $B_0 = \KK[x,y]/(y^2 - xy - x^4)$ and $B_2 = \KK[x,y]/(y^2 - x^4, x^3y)$ and the following infinite family of non-isomorphic algebras
$$B_1(c) = \KK[x,y]/(y^2 - x^2y - cx^4, x^3y), \quad c\in\KK^*.$$

For these algebras, the independence of the corresponding hypersurface from the choice of the hyperplane in the maximal ideal can be checked in the same way as for the algebra~$B_1$ for $n\geqslant 10$ which we did above.

The case of the Hilbert-Samuel sequence $(1,2,2,2,1,1)$, $n = 8$, is considered in \cite[Section~5, Example 4]{ElVa}. The corresponding Gorenstein algebras are again $\KK[x,y]/I_{i,1}$ ($i = 0,1,2$). However, $I_{1,1}\cong I_{2,1}$ if $n = 8$, so only there are only two isomorphism classes of algebras in this case.

Finally, if $n = 7$ and the Hilbert-Samuel sequence is $(1,2,2,2,1)$, then one can conclude from \cite[Section 5, Remark 5]{ElVa} that there are three isomorphism classes of Gorenstein algebras with this sequence, namely
$$C_1 = \KK[x,y]/(y^2 - x^2, x^3y),\quad C_2 = \KK[x,y]/(y^2 - x^3, x^4 - x^3y, x^5), \quad C_3 = \KK[x,y]/(y^2, x^4 - x^3y).$$

Note that $C_1$ is the algebra $\KK[x,y]/I_{2,1}$, so it remains to prove independence of the corresponding hypersurface from the choice of the hyperplane in $\mm$ for the algebras $C_2$ and~$C_3$. For the algebra $C_2$, we have the following relations:
$$xy^2 = y^3 = x^4,\quad x^2y^2 = xy^3 = y^4 = 0.$$

Consider the following maps $\KK[x,y]\to\KK[x,y]$:
\begin{equation*}
f(x) = x + a_2x^2 + a_3x^3 + a_4x^4,\quad f(y) = y + b_2xy + b_3x^2y + cx^3 + b_4x^4.
\end{equation*}

Since $f(x^3) = x^3 + 3a_2x^4$, $f(y^2) = y^2 + 2b_2x^4 + 2cx^4$, we see that $f$ is an automorphism of~$C_2$ if and only if $2b_2 + 2c = 3a_2$ and all maps $f$ of this kind form a $6$-dimensional unipotent subgroup $G\subset \Aut(C_2)$. Consider the action of $f$ on $$z = z_1x + z_2x^2 + z_3x^3 + z_4x^4 + w_1y + w_2xy + w_3x^2y.$$

As before, if $f$ stabilizes $z_4$, then $a_2 = a_3 = a_4 = 0$ and $b_4 = 0$. So, we have
$$f(z) = z_1x + z_2x^2 + z_3x^3 + z_4x^4 + (w_1 + w_2x + w_3x^2)(y + b_2xy + b_3x^2y + cx^3)$$
and $z_4$ is sent to
$$z_4 + w_3b_2 + w_2c + w_2b_3,$$
which means $b_3 + c = 0$ and $b_2 = 0$. The relation $2b_2 + 2c = 3a_2$ implies $c = b_3 = 0$, and the stabilizer of $z_4$ in $G$ is trivial, which is exactly what we need.

For $C_3$, consider the following maps:
\begin{equation*}
f(x) = x + a_2x^2 + a_3x^3 + a_4x^4,\quad
f(y) = y + b_2xy + b_3x^2y + b_4x^4.
\end{equation*}

The relations $y^2 = 0$ and $x^4 = x^3y$ are preserved by any $f$ of this kind, so the set of all such maps $f$ forms a $6$-dimensional unipotent subgroup of $\Aut(C_3)$. Again, consider the action of $f$ on
$$z = z_1x + z_2x^2 + z_3x^3 + z_4x^4 + w_1y + w_2xy + w_3x^2y.$$
Suppose that $f$ stabilizes $z_4$. Then, as before, $a_2 = a_3 = a_4 = 0$ and $b_4 = 0$, and we have
$f\colon z_n \mapsto z_n + w_2b_3 + w_3b_2$. So, $b_2 = b_3 = 0$, and we are done.

To conclude, we arrive at the following theorem giving a complete classification of non-degenerate projective hypersurfaces of degree $n-3$ in $\PP^n$ admitting an induced additive action.

\begin{theorem}\label{codim_three}
    The number of isomorphism classes of non-degenerate projective hypersurfaces of degree $n - 3$ in $\PP^n$ admitting an induced additive action and the corresponding Gorenstein algebras are given in Table 2 below. The algebras $A_5^9(c)$, $c\in\KK^*$, are pairwise non-isomorphic for different $c$.
\end{theorem}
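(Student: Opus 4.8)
The plan is to assemble the theorem from the case analysis carried out in the three subsections above and to record the resulting count. By Lemma~\ref{alm_str}, a Gorenstein local algebra of dimension $n+1$ and length $n-2$ has one of the four Hilbert--Samuel sequences listed at the start of the section, and the sequence $(1,2,3,1,\ldots,1)$ is excluded by Proposition~\ref{onetwothree}. For each of the three remaining sequences the corresponding subsection produces the complete list of Gorenstein algebras: Theorem~\ref{classif} is applied whenever the relevant pair $(s,t)$ with $s=n-3$ is regular and satisfies $s\ge 2t$, while the classifications of \cite{Casn} (valid for dimension at most $9$) and of \cite[Section~5]{ElVa} (Examples~2 and~4, Remark~5) handle the finitely many exceptional small values of $n$, together with $n=9$ for the sequence $(1,2,2,2,1,1,1)$, where the pair $(6,3)$ fails to be regular. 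In each subsection a unipotent subgroup of $\Aut(A)$ of dimension $n-1$ (or $n$, in the case of $B_0$) is exhibited, and Lemma~\ref{orbit} is invoked to conclude that the corresponding non-degenerate hypersurface is independent of the choice of the hyperplane $U\subset\mm$ complementary to $\Soc A$. Together with Theorem~\ref{tgor} and the fact that a non-degenerate hypersurface carries at most one induced additive action up to equivalence (see \cite[Theorem~2.32]{AZa}), this shows that isomorphism classes of the hypersurfaces in question are in bijection with isomorphism classes of the Gorenstein algebras found above; these are precisely the algebras recorded, under the names $A^n_i$, in Table~2.

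It then remains to count. For $n\ge 10$ all three pairs $(s,t)$ are regular with $s\ge 2t$, so Theorem~\ref{classif} gives $1$, $2$ and $3$ algebras for the sequences $(1,4,1,\ldots)$, $(1,3,2,1,\ldots)$ and $(1,2,2,2,1,\ldots)$ respectively, hence $6$ classes in all. For $n=9$ the third sequence contributes the two algebras $B_0,B_2$ and the infinite family $B_1(c)$, $c\in\KK^*$, so there are infinitely many classes. For $n=8$ the third sequence contributes only two algebras, since $I_{1,1}$ and $I_{2,1}$ become isomorphic, giving $1+2+2=5$; for $n=7$ it contributes $C_1,C_2,C_3$, giving $1+2+3=6$; for $n=6$ the sequence $(1,3,2,1)$ contributes $B_1\cong B_3$ together with $B_2$ while $(1,2,2,2)$ is not Gorenstein, giving $1+2=3$; and for $n=5$ only $(1,4,1)$ is compatible with the Gorenstein condition, giving $1$. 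These numbers match Table~1, which provides a useful consistency check.

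Finally, the claim that the algebras $A^9_5(c)=\KK[x,y]/(y^2-x^2y-cx^4,\,x^3y)$, $c\in\KK^*$, are pairwise non-isomorphic is part of the classification of non-regular almost stretched Gorenstein algebras in \cite[Section~5, Example~2]{ElVa}; there $c$ is exhibited as an isomorphism invariant, essentially because the direction of $y$ in $\mm/\mm^2$---the unique direction on which the squaring map $\mm/\mm^2\to\mm^2/\mm^3$ vanishes---is preserved by every isomorphism, which cuts the available freedom down to rescalings of $x$ and $y$ that must leave the relation $y^2-x^2y-cx^4$ in normal form and hence fix $c$. I would cite this directly rather than reproduce the computation. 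The main obstacle in proving the present theorem is therefore not any single step but keeping the bookkeeping airtight: one must verify that the algebras drawn from \cite{Casn}, \cite{AZa} and \cite{ElVa} in the exceptional small cases are correctly matched to their Hilbert--Samuel sequences, that the coincidences $B_1\cong B_3$ (for $n=6$) and $I_{1,1}\cong I_{2,1}$ (for $n=8$) are the only ones, and that no Gorenstein algebra is omitted or counted twice, so that Table~2 is complete and consistent with Table~1.
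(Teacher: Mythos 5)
Your proposal is correct and follows essentially the same route as the paper: the theorem is assembled exactly as you describe, from the case analysis over the Hilbert--Samuel sequences via Lemma~\ref{alm_str} and Proposition~\ref{onetwothree}, Theorem~\ref{classif} in the regular range, the classifications of \cite{Casn} and \cite[Section~5]{ElVa} for the exceptional values $n=6,7,8,9$, Lemma~\ref{orbit} for independence of $U$, and \cite[Theorem~2.32]{AZa} to pass from algebras to hypersurfaces. In particular, the paper likewise takes the pairwise non-isomorphism of the $A_5^9(c)$ directly from \cite[Section~5, Example~2]{ElVa}, and your counts for each $n$ agree with Tables~1 and~2.
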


\renewcommand{\arraystretch}{1.5}
\hspace{-1.2cm}
\begin{tabular}{ |c|c|l|  }
\hline
$n$ & \makecell{Number \\of hypersurfaces} & \multicolumn{1}{|c|}{The corresponding Gorenstein algebras}\\
\hline
$5$ & $1$ & $A_1^5 = \KK[x,y,z,t]/(xy, xz, xt, yz, yt, zt, z^2 - x^{2}, t^2 - x^{2}, y^2 - x^{2})$\\
\hline
\multirow{3}{*}{$6$} & \multirow{3}{*}{$3$} & $A_1^6 = \KK[x,y,z,t]/(xy, xz, xt, yz, yt, zt, z^2 - x^{3}, t^2 - x^{3}, y^2 - x^{3})$\\
& &$B_1 = \KK[x,y,z]/(xy, xz, yz, y^3 - x^3, z^2 - x^3)$\\
& & $B_2 = \KK[x,y,z]/(x^2y - x^3, y^2, xz, yz, z^2 - x^3)$\\
\hline
\multirow{6}{*}{$7$} & \multirow{6}{*}{$6$} & $A_1^7 = \KK[x,y,z,t]/(xy, xz, xt, yz, yt, zt, z^2 - x^{4}, t^2 - x^{4}, y^2 - x^{4})$\\
& &$A_2^7 = \KK[x,y,z]/(xz, yz, z^2 - x^{4}, y^2 - xy - x^{3}, x^2y)$\\
& & $A_3^7 = \KK[x,y,z]/(xz, yz, z^2 - x^{4}, y^2 - x^{3}, x^2y)$\\
& & $A_6^7 = C_1 =  \KK[x,y]/(y^2 - x^{2}, x^3y)$\\
& & $C_2 = \KK[x,y]/(y^2 - x^3, x^4 - x^3y, x^5)$\\
& & $C_3 = \KK[x,y]/(y^2, x^4 - x^3y)$\\
\hline
\multirow{5}{*}{$8$} & \multirow{5}{*}{$5$} & $A_1^8 = \KK[x,y,z,t]/(xy, xz, xt, yz, yt, zt, z^2 - x^{5}, t^2 - x^{5}, y^2 - x^{5})$\\
& &$A_2^8 = \KK[x,y,z]/(xz, yz, z^2 - x^{5}, y^2 - xy - x^{4}, x^2y)$\\
& & $A_3^8 = \KK[x,y,z]/(xz, yz, z^2 - x^{5}, y^2 - x^{4}, x^2y)$\\
& & $A_4^8 = \KK[x,y]/(y^2 - xy - x^{3}, x^3y)$\\
& & $A_5^8 \cong A_6^8 = \KK[x,y]/(y^2 - x^2y - x^{3}, x^3y)$\\
\hline
\multirow{6}{*}{$9$} & \multirow{6}{*}{$\infty$} & $A_1^9 = \KK[x,y,z,t]/(xy, xz, xt, yz, yt, zt, z^2 - x^{6}, t^2 - x^{6}, y^2 - x^{6})$\\
& &$A_2^9 = \KK[x,y,z]/(xz, yz, z^2 - x^{6}, y^2 - xy - x^{5}, x^2y)$\\
& & $A_3^9 = \KK[x,y,z]/(xz, yz, z^2 - x^{6}, y^2 - x^{5}, x^2y)$\\
& & $A_4^9 = \KK[x,y]/(y^2 - xy - x^{4}, x^3y)$\\
& & $A_5^9(c) = \KK[x,y]/(y^2 - x^2y - cx^{4}, x^3y)$, $c\in\KK^*$\\
& & $A_6^9 = \KK[x.y]/(y^2 - x^3y - x^{4}, x^3y)$\\
\hline
\multirow{6}{*}{$\geq 10$} & \multirow{6}{*}{$6$} & $A_1^n = \KK[x,y,z,t]/(xy, xz, xt, yz, yt, zt, z^2 - x^{n-3}, t^2 - x^{n-3}, y^2 - x^{n-3})$\\
& &$A_2^n = \KK[x,y,z]/(xz, yz, z^2 - x^{n-3}, y^2 - xy - x^{n-4}, x^2y)$\\
& & $A_3^n = \KK[x,y,z]/(xz, yz, z^2 - x^{n-3}, y^2 - x^{n-4}, x^2y)$\\
& & $A_4^n = \KK[x,y]/(y^2 - xy - x^{n-5}, x^3y)$\\
& & $A_5^n = \KK[x,y]/(y^2 - x^2y - x^{n-5}, x^3y)$\\
& & $A_6^n = \KK[x.y]/(y^2 - x^3y - x^{n-5}, x^3y)$\\
\hline
\end{tabular}

\vspace{0.05cm}

\renewcommand{\arraystretch}{1}

\begin{center}
Table 2: Non-degenerate projective hypersurfaces of degree $n-3$ in $\PP^n$\\
admitting an induced additive action
\end{center}

\vspace{0.3cm}

Let us take a closer look at the infinite family of non-degenerate hypersurfaces $X_c \subseteq\PP^9$ corresponding to the algebras $\KK[x,y]/(y^2 - x^2y - cx^4, x^3y)$. By \cite[Theorem 2.14]{AZa}, their equations are
$$z_0^{6}\pi\Big(\ln\big(1 + \frac{z}{z_0}\big)\Big) = \pi\Big(\sum_{k=1}^{6}\frac{(-1)^{k-1}}{k}z_0^{8-k}(z_1x + \ldots + z_{6}x^{6} + w_1y + w_2xy + w_3x^2y\big)^k\Big) =$$
$$ = z_0^5f_1 + z_0^4f_2 + z_0^3f_3 + z_0^2f_4 + z_0f_5 + f_6$$
where $\pi\colon\mm \to \mm/U$ is the projection and each $f_i$ is a homogeneous polynomial in $z_1, \ldots, z_6, w_1, w_2, w_3$ of degree $i$. We have the relations $xy^2 = cx^5$ and $y^3 = x^2y^2 = cx^6$, so the polynomials $f_6$ and $f_5$ do not depend on $w_1, w_2, w_3$ and are equal, respectively, to $-\frac{1}{6}z_1^6$ and $z_1^4z_2$. It follows from \cite[Proposition 3]{ABeZa} that all these hypersurfaces are not normal.


\section*{Statements and Declarations}

The article was prepared within the framework of the project “International academic cooperation” HSE University.

\section*{Conflict of Interest}

The authors have no relevant financial or non-financial interests to disclose.

\section*{Data Availability}

Data sharing not applicable to this article as no datasets were generated or analysed during the current study.


\begin{thebibliography}{99}
%
\bibitem{ABeZa}
Ivan Arzhantsev, Ivan Beldiev, and Yulia Zaitseva. On normality of projective hypersurfaces with an additive action. https://arxiv.org/abs/2404.18484, 17 pages

\bibitem{AP}
Ivan Arzhantsev and Andrey Popovskiy. Additive actions on projective hypersurfaces. In: Automorphisms in Birational and Affine Geometry, Springer Proc. Math. Stat.~79, Springer, 2014, 17-33
%
\bibitem{AR}
Ivan Arzhantsev and Elena Romaskevich. Additive actions on toric varieties. Proc. Amer. Math. Soc.~145 (2017), no.~5, 1865-1879
%
\bibitem{AS}
Ivan Arzhantsev and Elena Sharoyko. Hassett-Tschinkel correspondence: Modality and projective hypersurfaces. J.~Algebra~348 (2011), no.~1, 217-232
%
\bibitem{AZa}
Ivan Arzhantsev and Yulia Zaitseva. Equivariant completions of affine spaces. Russian Math. Surveys~77 (2022), no.~4, 571-650
%
\bibitem{Bel}
Ivan Beldiev. Gorenstein algebras and uniqueness of additive actions. Results Math.~78 (2023), no.~5, article~192
%
\bibitem{BGT}
Viktoriia Borovik, Sergey Gaifullin, and Anton Trushin. Commutative actions on smooth projective quadrics. Comm. Algebra 50 (2022), no. 12, 5468-5476
%
\bibitem{Casn}
Gianfranco Casnati. Isomorphism types of Artinian Gorenstein local algebras of multiplicity at most 9. Comm. Algebra 38 (2010), no. 8, 2738-2761
%
\bibitem{ElVa}
Juan Elias and Giuseppe Valla. Isomorphism classes of certain Artinian Gorenstein algebras. Algebr. Represent. Theory 14 (2011), no. 3, 429-448
%
\bibitem{HT}
Brendan Hassett and Yuri Tschinkel. Geometry of equivariant compactifications of $\mathbb{G}^n_a$. Int. Math. Res. Not. IMRN 1999 (1999), no.~22, 1211-1230
%
%
\bibitem{Liu}
Yingqi Liu. Additive actions on hyperquadrics of corank two. Electron. Res. Arch. 30 (2022), no.~1, 1-34
%
\bibitem{Mac}
F.H.S. Macaulay. Some properties of enumeration in the theory of modular systems. Proc. London Math. Soc. (2) 26 (1927), 531–555.

\bibitem{PV}
Vladimir Popov and Ernest Vinberg. Invariant Theory. In: Algebraic Geometry IV, Encyclopaedia Math. Sci., vol.~55, 123-284, Springer-Verlag, Berlin, 1994
%
\bibitem{Sha-2}
Anton Shafarevich. Additive actions on toric projective hypersurfaces. Results Math. 76 (2021), no.~3, article~145

\bibitem{Sha} Anton Shafarevich. Toric varieties admitting an action of a unipotent group with a finite number of orbits. Res. Math. Sci. 12 (2025), no.~1, article~6
%
%
\end{thebibliography}
\end{document}